\documentclass[11pt,letterpaper,reqno]{amsart}

\title[Semiexplicit Symplectic Integrators]{Semiexplicit Symplectic Integrators\\for Non-separable Hamiltonian Systems}
\author{Buddhika Jayawardana}
\author{Tomoki Ohsawa}
\address{Department of Mathematical Sciences, The University of Texas at Dallas, 800 W Campbell Rd, Richardson, TX 75080-3021}
\email{Buddhika.Jayawardana@utdallas.edu,tomoki@utdallas.edu}

\date{\today}

\keywords{Symplectic integrator, non-separable Hamiltonian, extended phase space}

\subjclass[2020]{37M15,65P10}


\usepackage{graphicx,amssymb,caption,paralist,epstopdf}

\usepackage[margin=1in, marginpar=.5in]{geometry}

\usepackage[numbers,sort&compress]{natbib}
\usepackage[colorlinks=false]{hyperref}

\usepackage[nameinlink]{cleveref}

\usepackage{tikz}
\usetikzlibrary{arrows,positioning}
\usepackage{tikz-cd}

\theoremstyle{plain}
\newtheorem{theorem}{Theorem}
\newtheorem*{theorem*}{Theorem}

\newtheorem{lemma}[theorem]{Lemma}
\newtheorem{proposition}[theorem]{Proposition}

\theoremstyle{definition}
\newtheorem{definition}[theorem]{Definition}

\theoremstyle{remark}


\def\pd#1#2{\dfrac{\partial #1}{\partial #2}}

\def\parentheses#1{{\left(#1\right)}}

\def\braces#1{{\left\{#1\right\}}}



\def\norm#1{{\left\|#1\right\|}}

\def\R{\mathbb{R}}

\def\N{\mathbb{N}}
\def\defeq{\mathrel{\mathop:}=}

\def\setdef#1#2{{\left\{ #1 \ |\ #2 \right\}}}

\def\eps{\varepsilon}

\def\d{{\bf d}}

\def\dt{\Delta t}
\DeclareMathOperator{\sgn}{sgn}

\begin{document}

\footskip=.6in

\begin{abstract}
  We construct a symplectic integrator for non-separable Hamiltonian systems combining an extended phase space approach of Pihajoki and the symmetric projection method. The resulting method is semiexplicit in the sense that the main time evolution step is explicit whereas the symmetric projection step is implicit. The symmetric projection binds potentially diverging copies of solutions, thereby remedying the main drawback of the extended phase space approach. Moreover, our semiexplicit method is symplectic in the original phase space. This is in contrast to existing extended phase space integrators, which are symplectic only in the extended phase space. We demonstrate that our method exhibits an excellent long-time preservation of invariants, and also that it tends to be as fast as and can be faster than Tao's explicit modified extended phase space integrator particularly for small enough time steps and with higher-order implementations and for higher-dimensional problems.
\end{abstract}

\maketitle

\section{Introduction}

\subsection{Non-Separable Hamiltonian Systems}
Consider a Hamiltonian system
\begin{equation}
  \label{eq:Ham}
  \dot{q} = D_{2}H(q, p),
  \qquad
  \dot{p} = -D_{1}H(q, p).
\end{equation}
with Hamiltonian $H\colon T^{*}\R^{d} \to \R$, where $D_{i}$ stands for the partial derivative with respect to the $i^{\rm th}$ set of variables.

Its underlying symplectic structure on the cotangent bundle $T^{*}\R^{d}$ is
\begin{equation}
  \label{eq:Omega}
  \Omega \defeq \d{q} \wedge \d{p} = \sum_{i=1}^{d} \d{q}^{i} \wedge \d{p}_{i}.
\end{equation}
Throughout the paper, we shall use shorthands like above suppressing the summation symbol and indices when writing 2-forms.

The Hamiltonian $H$ and the Hamiltonian system~\eqref{eq:Ham} are said to be \textit{separable} if $H$ can be written as $H(q,p) = K(p)+ V(q)$ with some functions $K$ and $V$, and \textit{non-separable} otherwise.
The separability is a crucial concept for the development of symplectic integrators for Hamiltonian systems because the separability sometimes turns implicit methods into explicit ones, and also makes the splitting method more amenable due to the fact that those flows of \eqref{eq:Ham} with $H(q,p) = K(p)$ and $H(q,p) = V(q)$ are both exactly solvable; see, e.g., \citet{SaCa2018}, \citet{LeRe2004}, and \citet{HaLuWa2006}.

On the other hand, developing an efficient symplectic integrator for non-separable Hamiltonian systems is a challenge.
Although one can obtain explicit integrators for certain classes of non-separable Hamiltonian systems~\cite{StLa2000,Bl2002,WuFoRo2003,McQu2004,Ch2009,Ta2016a,WaSuLiWu2021a,WaSuLiWu2021b,WaSuLiWu2021c,WuWaSuLi2021}, these methods are specialized for their respective specific forms of Hamiltonians, and do not seem to generalize to arbitrary non-separable Hamiltonians in a simple manner.

\subsection{Extended Phase Space Approach}
\label{ssec:extend_phase_space}
To our knowledge, the first explicit integrator for general non-separable Hamiltonian systems was developed by \citet{Pi2015}.
Specifically, in order to find an explicit integrator for non-separable Hamiltonian systems~\eqref{eq:Ham}, \citeauthor{Pi2015} proposed to solve the following extended system instead:
\begin{equation}
  \label{eq:Ham-extended}
  \begin{aligned}
    \dot{q} &= D_{2}H(x, p), \qquad & \dot{p} &= -D_{1}H(q, y), \medskip\\
    \dot{x} &= D_{2}H(q, y), \qquad & \dot{y} &= -D_{1}H(x, p).
  \end{aligned}
\end{equation}

One sees that if we impose the initial condition
\begin{equation*}
  (q(0), x(0), p(0), y(0)) = (q_{0}, q_{0}, p_{0}, p_{0}),
\end{equation*}
then the solution satisfies $(q(t), p(t)) = (x(t), y(t))$ for any $t \in \R$ (assuming that the solution exists and is unique), and $t \mapsto (q(t),p(t))$ coincides with the solution of the original system~\eqref{eq:Ham} with $(q(0), p(0)) = (q_{0}, p_{0})$.
In other words, the subspace
\begin{equation}
  \label{eq:mathcalN}
  \mathcal{N} \defeq \setdef{ (q, q, p, p) \in T^{*}\R^{2d} }{ (q, p) \in T^{*}\R^{d} } \subset T^{*}\R^{2d}
\end{equation}
is an invariant submanifold of \eqref{eq:Ham-extended}, and the system~\eqref{eq:Ham-extended} restricted to this submanifold gives two copies of the original system~\eqref{eq:Ham}.

Notice also that \eqref{eq:Ham-extended} is a Hamiltonian system defined on the extended phase space
\begin{equation*}
  T^{*}\R^{2d} = \setdef{ (q, x, p, y) \in \R^{4d} }{ (q, x) \in \R^{2d},\ (p, y) \in T_{(q,x)}^{*}\R^{2d} \cong \R^{2d} }.
\end{equation*}
More specifically, the extended system \eqref{eq:Ham-extended} is a Hamiltonian system on $T^{*}\R^{2d}$ with the extended Hamiltonian
\begin{equation*}
  \hat{H}\colon T^{*}\R^{2d} \to \R;
  \qquad
  \hat{H}(q, x, p, y) \defeq H(q, y) + H(x, p)
\end{equation*}
and the following standard symplectic form on $T^{*}\R^{2d}$:
\begin{equation}
  \label{eq:hatOmega}
  \hat{\Omega} \defeq \d{q} \wedge \d{p} + \d{x} \wedge \d{y}.
\end{equation}

Let us give a more geometric and intuitive interpretation; see also \Cref{fig:extended_system}.
\begin{figure}[hbtp]
  \centering
  \includegraphics[width=.6\linewidth]{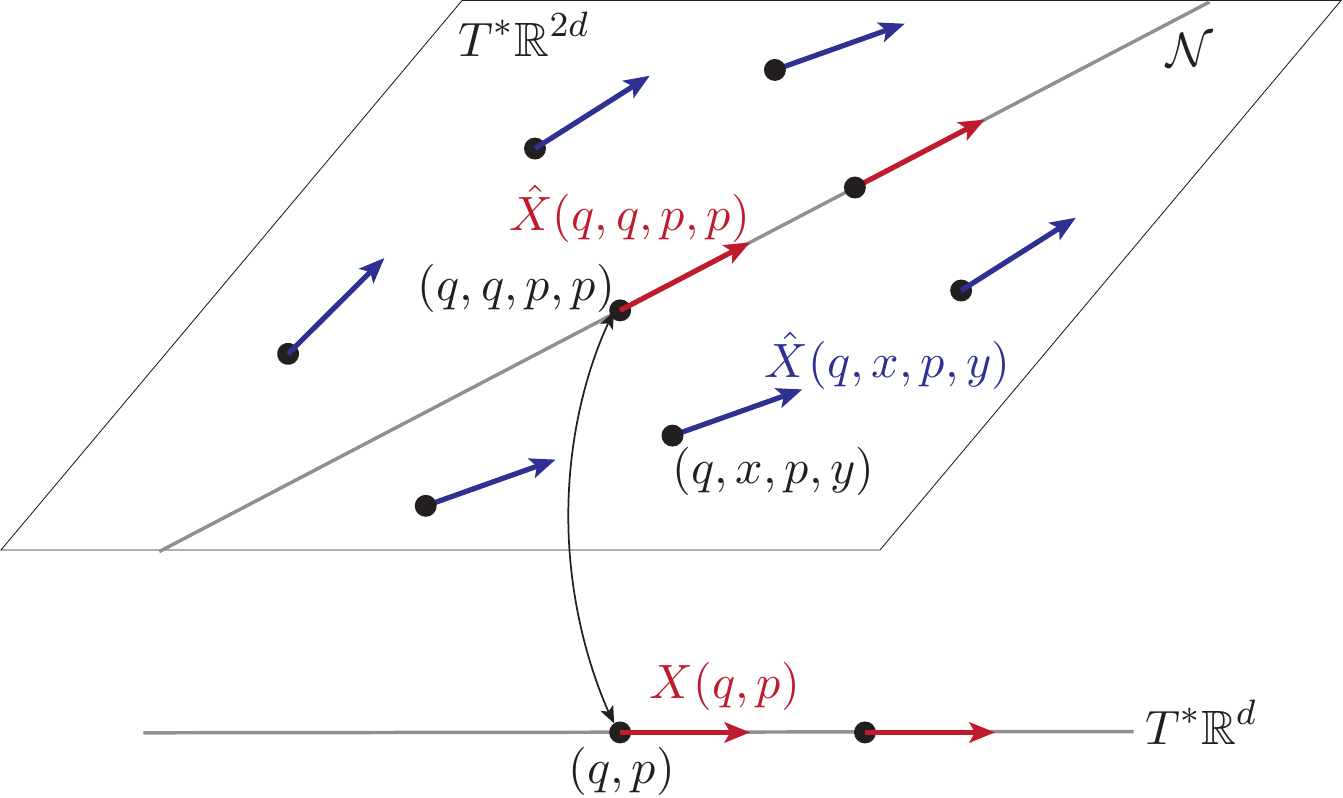}
  \caption{Hamiltonian vector field $\hat{X}$ on extended phase space $T^{*}\R^{2d}$ restricted to $\mathcal{N}$ gives two copies of Hamiltonian vector field $X$ on $T^{*}\R^{d}$.}
  \label{fig:extended_system}
\end{figure}
Let $\hat{X}$ be the vector field on $T^{*}\R^{2d}$ for the extended system~\eqref{eq:Ham-extended}:
\begin{equation*}
  \hat{X}(q, x, p, y) \defeq \parentheses{
    D_{2}H(x, p),\,
    D_{2}H(q, y),\,
    -D_{1}H(q, y),\,
    -D_{1}H(x, p)
  }.
\end{equation*}
If we restrict $\hat{X}$ to the submanifold $\mathcal{N}$, then $\hat{X}(q, q, p, p)$ consists of two copies of the Hamiltonian vector field 
\begin{equation*}
  X(q, p) \defeq \parentheses{
    D_{2}H(q, p),\,
    -D_{1}H(q, p)
  }
\end{equation*}
on the original phase space $T^{*}\R^{d}$.
Therefore, the dynamics in $\mathcal{N}$ defined by $\hat{X}$ is two copies of the Hamiltonian dynamics defined by $X$ on $T^{*}\R^{d}$.
Hence they are essentially the same dynamics.

\subsection{Extended Phase Space Integrators}
\label{ssec:extend_phase_space_integrators}
The salient feature of the above approach by \citet{Pi2015} is that the Hamiltonian is now separable:
Defining $\hat{H}_{A}, \hat{H}_{B} \colon T^{*}\R^{2d} \to \R$ by setting
\begin{equation*}
  \hat{H}_{A}(q, x, p, y) \defeq H(q, y),
  \qquad
  \hat{H}_{B}(q, x, p, y) \defeq H(x, p),
\end{equation*}
we have
\begin{equation*}
  \hat{H}(q, x, p, y)
  = \hat{H}_{A}(q, x, p, y) + \hat{H}_{B}(q, x, p, y)
  = H(q, y) + H(x, p).
\end{equation*}
Furthermore, the flows $\hat{\Phi}^{A}_{t}, \hat{\Phi}^{B}_{t}\colon T^{*}\R^{2d} \to T^{*}\R^{2d}$ corresponding to the Hamiltonians $\hat{H}_{A}$ and $\hat{H}_{B}$ are exactly solvable for any $t \in \R$.
Indeed, with $\hat{H}_{A}$, we have
\begin{equation*}
  \begin{aligned}
    \dot{q} &= 0, \qquad & \dot{p} &= -D_{1}H(q, y), \medskip\\
    \dot{x} &= D_{2}H(q, y), \qquad  & \dot{y} &= 0.
  \end{aligned}
\end{equation*}
and so
\begin{equation*}
  \hat{\Phi}^{A}_{t}(q_{0}, x_{0}, p_{0}, y_{0}) =
  \parentheses{
    q_{0},\,
    x_{0} + t\,D_{2}H(q_{0}, y_{0}),\,
    p_{0} - t\,D_{1}H(q_{0}, y_{0}),\,
    y_{0}
  },
\end{equation*}
whereas, with $\hat{H}_{B}$, we have
\begin{equation*}
  \begin{aligned}
    \dot{q} &= D_{2}H(x, p), \qquad & \dot{p} &= 0, \medskip\\
    \dot{x} &= 0, \qquad & \dot{y} &= -D_{1}H(x, p).
  \end{aligned}
\end{equation*}
and so
\begin{equation*}
  \hat{\Phi}^{B}_{t}(q_{0}, x_{0}, p_{0}, y_{0}) =
  \parentheses{
    q_{0} + t\,D_{2}H(x_{0}, p_{0}),\,
    x_{0},\,
    p_{0},\,
    y_{0} - t\,D_{1}H(x_{0}, p_{0})
  }.
\end{equation*}

One can therefore construct a $2^{\rm nd}$-order explicit integrator by using the Strang splitting~\cite{St1968}:
\begin{equation}
  \label{eq:Strang}
  \hat{\Phi}_{\dt} \defeq \hat{\Phi}^{A}_{\dt/2} \circ \hat{\Phi}^{B}_{\dt} \circ \hat{\Phi}^{A}_{\dt/2}
  \colon T^{*}\R^{2d} \to T^{*}\R^{2d}
\end{equation}
with time step $\dt$.

\subsection{Issues of Extended Phase Space Integrators}
\label{ssec:issues}
As ingenious as the above idea is, there are some issues to be addressed for such extended phase space integrators.

Perhaps the most significant issue is that the defect $(x - q, y - p)$ in the phase space copies $(q,p)$ and $(x,y)$ tends to grow in time numerically.
This can be detrimental in the long run because the solution of the extended system~\eqref{eq:Ham-extended} may diverge from that of the original system~\eqref{eq:Ham} when $(x, y) \neq (q, p)$.
We shall demonstrate it in \Cref{fig:Schrodinger_diff} in \Cref{ssec:invariants} below.

As a remedy for this drawback, \citet{Ta2016b} suggested to solve an alternative extended system
\begin{equation}
  \label{eq:Tao}
  \begin{aligned}
    \dot{q} &= D_{2}H(x, p) + \omega(p - y), \qquad & \dot{p} &= -D_{1}H(q, y) - \omega(q - x), \medskip\\
    \dot{x} &= D_{2}H(q, y) + \omega(y - p), \qquad  & \dot{y} &= -D_{1}H(x, p) - \omega(x - q)
  \end{aligned}
\end{equation}
with some $\omega \in \R$.
Note that the above system~\eqref{eq:Tao} is also a Hamiltonian system on the extended phase space $T^{*}\R^{2d}$ with the Hamiltonian
\begin{equation*}
  \hat{H}_{A}(q, x, p, y) + \hat{H}_{B}(q, x, p, y) + \hat{H}_{C}(q, x, p, y),
\end{equation*}
where
\begin{equation*}
  \hat{H}_{C}(q, x, p, y) \defeq \frac{\omega}{2}\parentheses{ (x - q)^{2} + (y - p)^{2} }.
\end{equation*}
Also, the submanifold $\mathcal{N} \subset T^{*}\R^{2d}$ (defined in \eqref{eq:mathcalN}) is again an invariant manifold of \eqref{eq:Tao} as well.

The flow corresponding to $\hat{H}_{C}$ is also exactly solvable, and so one can construct symplectic integrators using splitting methods based on the three flows corresponding to the three Hamiltonians.
For example, \citet{Ta2016b} constructed the following $2^{\rm nd}$-order integrator:
\begin{equation}
  \label{eq:Tao-Strang}
  \hat{\Phi}^{A}_{\dt/2} \circ \hat{\Phi}^{B}_{\dt/2} \circ \hat{\Phi}^{C}_{\dt} \circ \hat{\Phi}^{B}_{\dt/2} \circ \hat{\Phi}^{A}_{\dt/2}
  \colon T^{*}\R^{2d} \to T^{*}\R^{2d}.
\end{equation}
It is demonstrated numerically in \cite{Ta2016b} (see also \Cref{fig:Schrodinger_diff} in \Cref{ssec:invariants} below) that the insertion of the $C$-step effectively suppresses the growth of the defect $(x - q, y - p)$.
We note however that the defect is far from negligible as we shall address later.

We also mention in passing that various modifications of extended phase space integrators are proposed, especially for relativistic dynamics with astrophysical applications~\cite{LiWu2017,LiWuHuLi2016,LuWuHuLi2017,PhysRevD.104.044055,ZhZhLiWu2022}. 

Another issue is that, while these integrators are symplectic in the extended phase space $T^{*}\R^{2d}$, it is not clear how that is related to the symplecticity of the original system~\eqref{eq:Ham} in $T^{*}\R^{d}$.
More specifically, given a discrete flow $\{ (q_{n}, x_{n}, p_{n}, y_{n}) \}_{n \ge 0}$ on $T^{*}\R^{2d}$ constructed by one of the above extended phase space integrators, one easily sees that the extended symplectic form $\hat{\Omega}$ defined in \eqref{eq:hatOmega} is preserved, but it is not clear if the projected dynamics $\{ (q_{n}, p_{n}) \}_{n \ge 0}$ on $T^{*}\R^{d}$ preserves the symplectic form $\Omega$ defined in \eqref{eq:Omega}.
In fact, such a projected dynamics on $T^{*}\R^{d}$ is not well-defined because the time evolution $(q_{n}, p_{n}) \mapsto (q_{n+1}, p_{n+1})$ depends on $(x_{n}, y_{n})$ as well.

\subsection{Main Result and Outline}
We propose to combine the above extended phase space approach of \citet{Pi2015} with the symmetric projection method (see, e.g., \cite[Section~V.4.1]{HaLuWa2006}) to construct a symplectic integrator for non-separable Hamiltonian systems~\eqref{eq:Ham}.

The resulting method is semiexplicit in the sense that the main time evolution step is explicit whereas the symmetric projection step is implicit.
We will describe the details of the method in \Cref{ssec:method} below, but let us give an overview of the method here.
Given an initial point $(q_{0}, p_{0})$, we construct discrete dynamics $\{ z_{n} = (q_{n}, p_{n}) \}_{n \ge 1}$ as follows:
For any known $z_{n} = (q_{n}, p_{n})$, we define $z_{n+1} = (q_{n+1}, p_{n+1})$ following these steps:
\begin{itemize}
\item $\zeta_{n} \defeq (q_{n}, q_{n}, p_{n}, p_{n}) \in T^{*}\R^{2d}$;
  \smallskip
\item $\hat{\zeta}_{n} = (\hat{q}_{n}, \hat{x}_{n}, \hat{p}_{n}, \hat{y}_{n}) \defeq \zeta_{n} + \xi_{n}$;
  \smallskip
\item $\hat{\zeta}_{n+1} \defeq \hat{\Phi}_{\dt}(\hat{\zeta}_{n})$ using an explicit extended phase space integrator $\hat{\Phi}_{\dt}$ such as \eqref{eq:Strang};
  \smallskip
\item $\zeta_{n+1} = (q_{n+1}, q_{n+1}, p_{n+1}, p_{n+1}) \defeq \hat{\zeta}_{n+1} + \xi_{n} \in \mathcal{N}$;
  \smallskip
\item $z_{n+1} \defeq (q_{n+1}, p_{n+1})$,
\end{itemize}
where $\xi_{n} \in T^{*}\R^{2d}$ is a function of $z_{n}$ and $\dt$ determined by solving nonlinear equations so that $\hat{\zeta}_{n+1}$ is projected to $\zeta_{n+1}$ in $\mathcal{N}$; this is the main part of the symmetric projection step and is implicit.

The proposed method resolves those issues of the extended phase space integrators raised in the previous subsection.
First, the projection step ensures $(x_{n}, y_{n}) = (q_{n}, p_{n})$ for any $n \ge 0$---eliminating the problematic defect $(x_{n} - q_{n}, y_{n} - p_{n})$.
As we shall show in \Cref{ssec:existence}, this also implies that our method defines a discrete flow $\Phi_{\dt}\colon z_{n} \mapsto z_{n+1}$ in the \textit{original} phase space $T^{*}\R^{d}$.
We will also show in \Cref{ssec:symmetry} that our method is symmetric assuming that $\hat{\Phi}$ is symmetric.

Our main result is that the resulting method $\Phi$ is symplectic in the \textit{original} phase space $T^{*}\R^{d}$ given that the extended phase space integrator $\hat{\Phi}$ is symplectic in the extended one $T^{*}\R^{2d}$.
We will prove this in \Cref{sec:symplecticity} along with a geometric interpretation of the integrator.

Finally, in \Cref{sec:results}, we will show the implementation of the method as well as some numerical results.
We will demonstrate that our method has desired orders of accuracy as well as that it preserves invariants in long-time simulations.
Since the Newton-type iterations required in the projection step is very simple, the method tends to be much faster than implicit symplectic methods of the same order.
The symmetric projection step being implicit, our method tends to be slower than the explicit method of \citet{Ta2016b} mentioned above for low-dimensional problems.
However, for higher-order methods and higher-dimensional problems, our method becomes as fast as \citeauthor{Ta2016b}'s.
This demonstrates that our method resolves the issues of extended phase space integrators without sacrificing the computational efficiency for practical problems.

\section{Extended Phase Space Integrators with Symmetric Projection}
\subsection{Definition of the Method}
\label{ssec:method}
Let us define a linear map
\begin{equation*}
  A\colon T^{*}\R^{2d} \cong \R^{4d} \to \R^{2d};
  \qquad
  A(q, x, p, y) \defeq (q - x, p - y),
\end{equation*}
or in the matrix representation
\begin{equation}
  \label{eq:A}
  A =
  \begin{bmatrix}
    I_{d} & -I_{d} & 0 & 0 \\
    0 & 0 & I_{d} & -I_{d}
  \end{bmatrix}.
\end{equation}
We see that
\begin{equation*}
  \ker A = \setdef{ (q, q, p, p) \in T^{*}\R^{2d} \cong \R^{4d} }{ q, p \in \R^{d} } = \mathcal{N},
\end{equation*}
where $\mathcal{N}$ is defined in \eqref{eq:mathcalN}.

Then, a more detailed description of our method is the following (see also \Cref{fig:scheme} below):
\begin{definition}[Extended phase space integrator with symmetric projection]
  \label{def:method}
  \leavevmode
  Given an extended phase space integrator $\hat{\Phi}_{\dt}\colon T^{*}\R^{2d} \to T^{*}\R^{2d}$ and $z_{n} = (q_{n}, p_{n}) \in T^{*}\R^{d}$,
  \begin{enumerate}
    \renewcommand{\theenumi}{\arabic{enumi}}
    \renewcommand{\labelenumi}{\sf\theenumi.}
  \item $\zeta_{n} \defeq (q_{n}, q_{n}, p_{n}, p_{n})$
  \item Find $\mu \in \R^{2d}$ such that $\hat{\Phi}_{\dt}(\zeta_{n} + A^{T}\mu) + A^{T} \mu \in \mathcal{N}$
    \label{step:shift1}
  \item $\hat{\zeta}_{n} \defeq \zeta_{n} + A^{T} \mu$
  \item $\hat{\zeta}_{n+1} \defeq \hat{\Phi}_{\dt}(\hat{\zeta}_{n})$
  \item $\zeta_{n+1} = (q_{n+1}, q_{n+1}, p_{n+1}, p_{n+1}) \defeq \hat{\zeta}_{n+1} + A^{T} \mu$
    \label{step:shift2}
  \item $z_{n+1} \defeq (q_{n+1}, p_{n+1})$
  \end{enumerate}
  Note that Steps~\ref{step:shift1}--\ref{step:shift2} combined are equivalent to solving the nonlinear equations
  \begin{equation}
    \label{eq:nonlinear_eqs}
    \zeta_{n+1} = \hat{\Phi}_{\dt}(\zeta_{n} + A^{T}\mu) + A^{T}\mu
    \quad\text{and}\quad
    A \zeta_{n+1} = 0
  \end{equation}
  for $(\mu, \zeta_{n+1}) \in \R^{2d} \times T^{*}\R^{2d}$.
\end{definition}

\begin{figure}[hbtp]
  \centering
  \includegraphics[width=.6\linewidth]{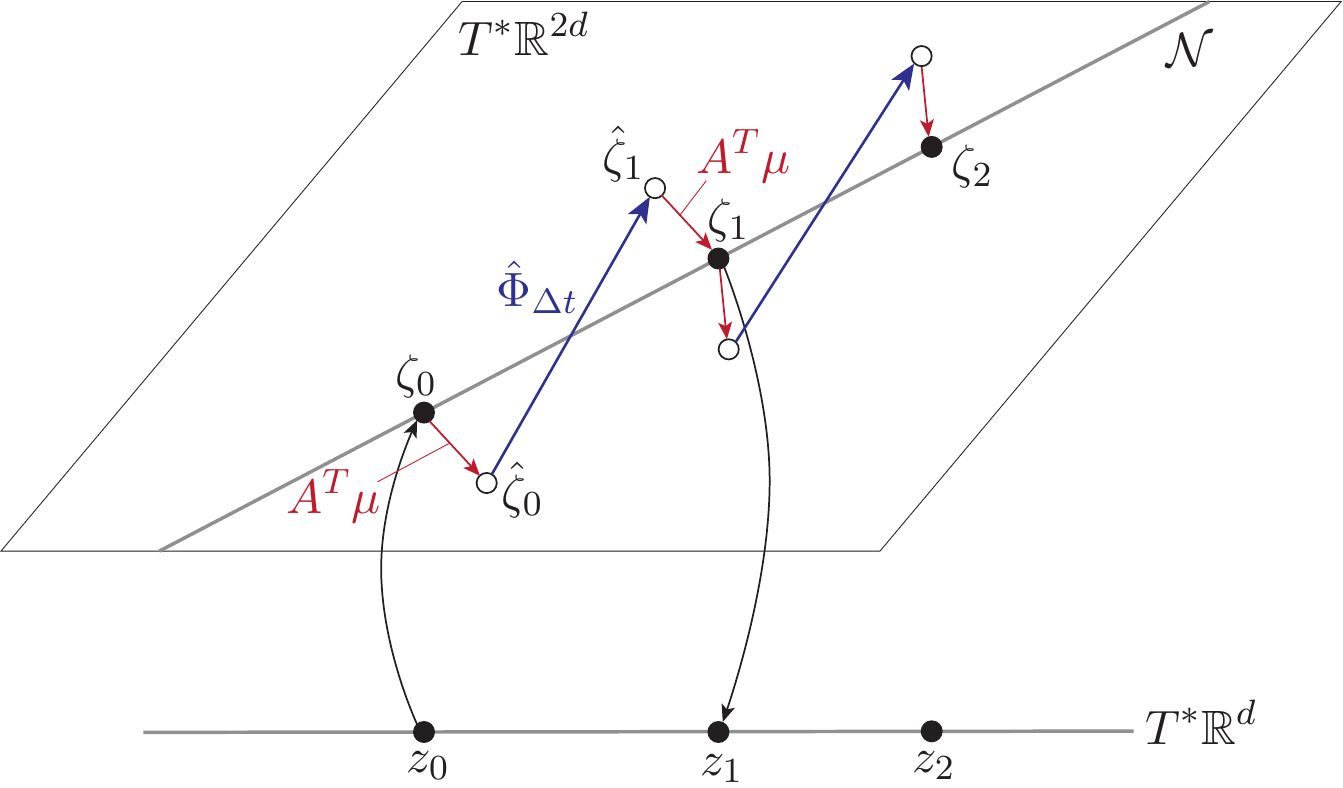}
  \caption{Extended phase space integrator with symmetric projection.}
  \label{fig:scheme}
\end{figure}

\subsection{Existence of Integrator}
\label{ssec:existence}
Let us show that the above method gives rise to an integrator $\Phi_{\dt}\colon T^{*}\R^{d} \to T^{*}\R^{d}$ for small enough $\dt > 0$.
To that end, let us first define
\begin{equation*}
  \iota\colon T^{*}\R^{d} \to \mathcal{N} \subset T^{*}\R^{2d};
  \qquad
  (q,p) \mapsto (q, q, p, p),
\end{equation*}
and also, for an arbitrary $\lambda \in \R^{2d}$,
\begin{equation*}
  \rho_{\lambda}\colon T^{*}\R^{2d} \to T^{*}\R^{2d};
  \qquad
  \zeta \mapsto \zeta + A^{T}\lambda.
\end{equation*}

In order to show that the discrete flow $\Phi_{\dt}\colon z_{n} \mapsto z_{n+1}$ exists, we need to show that, for a given $\zeta_{n} \in \mathcal{N}$, there exist $\zeta_{n+1} \in \mathcal{N}$ and $\mu \in \R^{2d}$ (a particular choice of $\lambda$ from above) satisfying \eqref{eq:nonlinear_eqs}, i.e.,
\begin{equation*}
  \iota \circ \Phi_{\dt}(z_{n}) =  \rho_{\mu} \circ \hat{\Phi}_{\dt} \circ \rho_{\mu} \circ \iota(z_{n}),
\end{equation*}
or equivalently, the diagram below commutes, where $\Psi_{\dt}$ will be defined in the proof of \Cref{prop:existence} to follow.
\begin{equation*}
  \begin{tikzcd}[column sep=8ex, row sep=7ex, ampersand replacement=\&]
    \hat{\zeta}_{n} = (\hat{q}_{n}, \hat{x}_{n}, \hat{p}_{n}, \hat{y}_{n}) \arrow[mapsto]{r}{\hat{\Phi}_{\dt}} \& \hat{\zeta}_{n+1} = (\hat{q}_{n+1}, \hat{x}_{n+1}, \hat{p}_{n+1}, \hat{y}_{n+1}) \arrow[mapsto,dashed]{d}{\rho_{\mu}} \\
    \zeta_{n} = (q_{n}, q_{n}, p_{n}, p_{n}) \arrow[mapsto,dashed]{u}{\rho_{\mu}} \& \zeta_{n+1} = (q_{n+1}, q_{n+1}, p_{n+1}, p_{n+1}) \\
    z_{n} = (q_{n}, p_{n}) \arrow[mapsto,dashed]{ru}{\Psi_{\dt}} \arrow[mapsto,dashed,swap]{r}{\Phi_{\dt}} \arrow[mapsto]{u}{\iota} \& z_{n+1} = (q_{n+1}, p_{n+1}) \arrow[mapsto]{u}{\iota}
  \end{tikzcd}
\end{equation*}

\begin{proposition}[Existence of discrete flow $\Phi$]
  \label{prop:existence}
  Let $\hat{\Phi}_{(\,\cdot\,)}\colon \R \times T^{*}\R^{2d} \to T^{*}\R^{2d}$ be the discrete flow defined by an integrator for the extended Hamiltonian system~\eqref{eq:Ham-extended}.
  For any $\bar{z}_{n} \in T^{*}\R^{d}$, there exist
  \begin{itemize}
  \item a neighborhood $U$ of $\bar{z}_{n}$ in $T^{*}\R^{d}$;\smallskip
  \item $\eps > 0$;\smallskip
  \item $\mu_{(\,\cdot\,)}\colon (-\eps,\eps) \times U \to \R^{2d}$ and $\Phi_{(\,\cdot\,)}\colon (-\eps,\eps) \times U \to T^{*}\R^{d}$
  \end{itemize}
  such that, for any $\dt \in (-\eps,\eps)$ and any $z_{n} \in U$,
  \begin{equation*}
    \iota \circ \Phi_{\dt}(z_{n}) = \hat{\Phi}_{\dt}\parentheses{ \iota(z_{n}) + A^{T}\mu_{\dt}(z_{n}) } + A^{T} \mu_{\dt}(z_{n}).
  \end{equation*}
\end{proposition}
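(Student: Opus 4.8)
The plan is to recast the projection condition in \Cref{def:method} as a root-finding problem and apply the implicit function theorem. For $z \in T^{*}\R^{d}$ and $\mu \in \R^{2d}$, Step~2 of \Cref{def:method} --- equivalently, the pair of equations \eqref{eq:nonlinear_eqs} --- requires $\hat{\Phi}_{\dt}(\iota(z) + A^{T}\mu) + A^{T}\mu$ to lie in $\mathcal{N} = \ker A$. I would therefore introduce
\begin{equation*}
  F\colon \R \times T^{*}\R^{d} \times \R^{2d} \to \R^{2d},
  \qquad
  F(\dt, z, \mu) \defeq A\parentheses{ \hat{\Phi}_{\dt}\parentheses{\iota(z) + A^{T}\mu} + A^{T}\mu },
\end{equation*}
so that finding $\mu$ in Step~2 is exactly solving $F(\dt, z, \mu) = 0$ for $\mu$. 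Here I use that $(\dt, \zeta) \mapsto \hat{\Phi}_{\dt}(\zeta)$ is $C^{1}$ and that $\hat{\Phi}_{0} = \id$; both hold for the splitting integrators discussed above, such as \eqref{eq:Strang}, whenever $H$ is sufficiently smooth, and they make $F$ of class $C^{1}$.

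Next I would exhibit an obvious zero and verify non-degeneracy there. Since $\iota(z) \in \mathcal{N} = \ker A$ we have $A\iota(z) = 0$, while a direct computation from \eqref{eq:A} gives $AA^{T} = 2 I_{2d}$. Hence, using $\hat{\Phi}_{0} = \id$,
\begin{equation*}
  F(0, z, \mu) = A\parentheses{ \iota(z) + 2 A^{T}\mu } = A\iota(z) + 2 A A^{T}\mu = 4\mu
\end{equation*}
for every $z$. In particular $F(0, \bar{z}_{n}, 0) = 0$, and the partial derivative with respect to $\mu$ satisfies $D_{3}F(0, \bar{z}_{n}, 0) = 4 I_{2d}$, which is invertible. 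The implicit function theorem then yields a neighborhood $U$ of $\bar{z}_{n}$ in $T^{*}\R^{d}$, a number $\eps > 0$, and a $C^{1}$ map $\mu_{(\,\cdot\,)}\colon (-\eps,\eps) \times U \to \R^{2d}$ with $\mu_{0}(\bar{z}_{n}) = 0$ such that $F(\dt, z, \mu_{\dt}(z)) = 0$ for all $(\dt, z) \in (-\eps,\eps) \times U$, with $\mu_{\dt}(z)$ the unique such solution near $0$.

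It remains to construct $\Phi_{\dt}$. Set
\begin{equation*}
  \Psi_{\dt}(z) \defeq \hat{\Phi}_{\dt}\parentheses{\iota(z) + A^{T}\mu_{\dt}(z)} + A^{T}\mu_{\dt}(z) \in T^{*}\R^{2d}.
\end{equation*}
By construction $A\,\Psi_{\dt}(z) = F(\dt, z, \mu_{\dt}(z)) = 0$, that is, $\Psi_{\dt}(z) \in \ker A = \mathcal{N}$. Since $\iota\colon T^{*}\R^{d} \to \mathcal{N}$ is a diffeomorphism, defining $\Phi_{\dt} \defeq \iota^{-1}\circ\Psi_{\dt}\colon U \to T^{*}\R^{d}$ gives a well-defined map of class $C^{1}$, and it satisfies $\iota\circ\Phi_{\dt}(z_{n}) = \hat{\Phi}_{\dt}(\iota(z_{n}) + A^{T}\mu_{\dt}(z_{n})) + A^{T}\mu_{\dt}(z_{n})$, which is the asserted identity; the maps $\Psi_{\dt}$ and $\Phi_{\dt}$ are precisely those appearing in the commutative diagram preceding the proposition.

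The only step requiring real care is the invertibility of $D_{3}F$ at the base point, which reduces to the structural fact that $AA^{T} = 2 I_{2d}$ is invertible --- equivalently, that the admissible shift directions (the image of $A^{T}$) are complementary to the constraint manifold $\mathcal{N} = \ker A$. This transversality is what makes the symmetric projection well-posed; everything else is a routine invocation of the implicit function theorem and of the diffeomorphism $\iota\colon T^{*}\R^{d} \to \mathcal{N}$.
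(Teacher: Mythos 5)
Your proof is correct and follows essentially the same route as the paper: an application of the implicit function theorem at $\dt = 0$, with the nondegeneracy coming from $A$ being full rank (equivalently $AA^{T} = 2I_{2d}$), followed by the observation that the resulting point lies in $\mathcal{N} = \ker A$ so that $\Phi_{\dt}$ can be read off through $\iota$. The only cosmetic difference is that you eliminate $\zeta_{n+1}$ and apply the theorem to the reduced equation $A\bigl(\hat{\Phi}_{\dt}(\iota(z)+A^{T}\mu)+A^{T}\mu\bigr)=0$ for $\mu$ alone, whose Jacobian in $\mu$ at $\dt=0$ is $4I_{2d}$ (the same computation the paper uses for the simplified Newton iteration in \eqref{eq:f-projection}--\eqref{eq:simplified_Newton}), whereas the paper keeps the pair $(\zeta,\lambda)$ as unknowns and inverts the corresponding block Jacobian.
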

\begin{proof}
  Let us define
  \begin{gather*}
    F\colon (\R \times T^{*}\R^{d}) \times (T^{*}\R^{2d} \times \R^{2d}) \to T^{*}\R^{2d} \times \R^{2d}
    \\
    F( (h,z_{n}), (\zeta,\lambda) )
    \defeq
    \begin{bmatrix}
      \zeta - \hat{\Phi}_{h}\parentheses{ \iota(z_{n}) + A^{T}\lambda } - A^{T} \lambda \smallskip\\
      A \zeta
    \end{bmatrix}.
  \end{gather*}
  Then the Jacobian
  \begin{equation*}
    \pd{F}{(\zeta,\lambda)}( (0,\bar{z}_{n}), (\iota(\bar{z}_{n}),0))
    = \begin{bmatrix}
      I_{2d} & -2A^{T} \\
      A & 0
    \end{bmatrix}
  \end{equation*}
  is invertible because $A$---defined in \eqref{eq:A}---is full-rank.
  Therefore, by the Implicit Function Theorem, there exist a neighborhood $U \subset T^{*}\R^{d}$ of $\bar{z}_{n}$, a positive number $\eps > 0$, and
  \begin{equation*}
    \Psi_{(\,\cdot\,)} \colon (-\eps,\eps) \times U \to T^{*}\R^{2d},
    \qquad
    \mu_{(\,\cdot\,)} \colon (-\eps,\eps) \times U \to \R^{2d}
  \end{equation*}
  such that, for any $(\dt,z_{n}) \in (-\eps,\eps) \times U$,
  \begin{equation*}
    F\parentheses{ (\dt,z_{n}), (\Psi_{\dt}(z_{n}), \mu_{\dt}(z_{n})) } = 0,
  \end{equation*}
  that is,
  \begin{equation*}
    \Psi_{\dt}(z_{n}) = \hat{\Phi}_{\dt}\parentheses{ \iota(z_{n}) + A^{T}\mu_{\dt}(z_{n}) } + A^{T} \mu_{\dt}(z_{n}),
    \qquad
    A^{T} \Psi_{\dt}(z_{n}) = 0.
  \end{equation*}
  However, since the second equality says $\Psi_{\dt}(z_{n}) \in \mathcal{N}$, there exists
  \begin{equation*}
    \Phi_{\dt}\colon U \to T^{*}\R^{d}
    \quad\text{such that}\quad
    \Psi_{\dt}(z_{n}) = \iota \circ \Phi_{\dt}(z_{n}). \qedhere
  \end{equation*}
\end{proof}

\subsection{Symmetry and Order of $\Phi$}
\label{ssec:symmetry}
We can also show that our method $\Phi$ is symmetric assuming that the extended phase space integrator $\hat{\Phi}$ is symmetric:
\begin{proposition}[Symmetry and order of $\Phi$]
  \label{prop:symmetry_order}
  If the extended phase space integrator $\hat{\Phi}$ is symmetric, then the integrator $\Phi$ defined in \Cref{prop:existence} is symmetric, i.e., $\Phi_{\dt}^{-1} = \Phi_{-\dt}$ for any $\dt \in (-\eps,\eps)$ with $\eps > 0$ defined in \Cref{prop:existence}.
  Moreover, $\Phi$ has the same order of accuracy as $\hat{\Phi}$.
\end{proposition}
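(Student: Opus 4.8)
The plan is to read symmetry off the structure of \Cref{def:method}: the method adds the \emph{same} shift $A^{T}\mu$ immediately before and immediately after $\hat{\Phi}_{\dt}$, which is exactly what makes the symmetric projection method symmetric (cf.\ \cite[Section~V.4.1]{HaLuWa2006}). Writing $\rho_{\lambda}(\zeta) = \zeta + A^{T}\lambda$ (so that $\rho_{\lambda}^{-1} = \rho_{-\lambda}$) and $\mu = \mu_{\dt}(z_{n})$, \Cref{prop:existence} says $\iota \circ \Phi_{\dt}(z_{n}) = \rho_{\mu}\circ\hat{\Phi}_{\dt}\circ\rho_{\mu}\circ\iota(z_{n})$. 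First I would fix $z_{n}$, put $\mu \defeq \mu_{\dt}(z_{n})$ and $z_{n+1} \defeq \Phi_{\dt}(z_{n})$, and verify that $-\mu$ is an admissible multiplier for the step $\Phi_{-\dt}$ started at $z_{n+1}$ and that it returns $z_{n}$: since $\iota(z_{n+1}) - A^{T}\mu = \hat{\Phi}_{\dt}\parentheses{\iota(z_{n}) + A^{T}\mu}$, the symmetry $\hat{\Phi}_{-\dt} = \hat{\Phi}_{\dt}^{-1}$ gives
\begin{equation*}
  \hat{\Phi}_{-\dt}\parentheses{\iota(z_{n+1}) + A^{T}(-\mu)} + A^{T}(-\mu)
  = \parentheses{\iota(z_{n}) + A^{T}\mu} + A^{T}(-\mu) = \iota(z_{n}) \in \mathcal{N},
\end{equation*}
so $(-\mu, \iota(z_{n}))$ solves the nonlinear system~\eqref{eq:nonlinear_eqs} with time step $-\dt$ and base point $z_{n+1}$. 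By the local uniqueness coming from \Cref{prop:existence} this forces $\mu_{-\dt}(z_{n+1}) = -\mu$ and $\Phi_{-\dt}(z_{n+1}) = z_{n}$, i.e.\ $\Phi_{-\dt}\circ\Phi_{\dt} = \id$, which is $\Phi_{\dt}^{-1} = \Phi_{-\dt}$.

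The delicate point here is purely bookkeeping of domains: the uniqueness from the Implicit Function Theorem in \Cref{prop:existence} is only local in $(\dt,\mu)$ near $(0,0)$ and local in $z_{n}$, so I would need to shrink $\eps$ and $U$ so that for $z_{n}\in U$ and $\abs{\dt}<\eps$ we simultaneously have $z_{n+1}=\Phi_{\dt}(z_{n})$ inside the neighborhood on which $\Phi_{-\dt}$ is defined and $-\mu_{\dt}(z_{n})$ inside the ball on which $\mu_{-\dt}(z_{n+1})$ is the unique solution. This is possible because $\Phi_{0}=\id$ and $\mu_{0}\equiv 0$, so $\Phi_{\dt}(z_{n})\to z_{n}$ and $\mu_{\dt}(z_{n})\to 0$ uniformly on compacta as $\dt\to 0$; this is the standard continuity argument for symmetric projection, tedious but routine, and I expect it to be the main (if not conceptual) obstacle.

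For the order claim, let $r$ be the order of $\hat{\Phi}$ and let $\phi_{t}$ and $\hat{\phi}_{t}$ be the exact flows of~\eqref{eq:Ham} and~\eqref{eq:Ham-extended}. The key estimate is $\mu_{\dt}(z_{n}) = O(\dt^{r+1})$. To get it, recall from \Cref{ssec:extend_phase_space} that $\mathcal{N}$ is invariant under $\hat{\phi}_{t}$ with $\hat{\phi}_{t}\circ\iota = \iota\circ\phi_{t}$; together with $\hat{\Phi}_{\dt} = \hat{\phi}_{\dt} + O(\dt^{r+1})$ this yields $A\,\hat{\Phi}_{\dt}(\iota(z_{n})) = A\,\iota(\phi_{\dt}(z_{n})) + O(\dt^{r+1}) = O(\dt^{r+1})$, since $\mathcal{N} = \ker A$. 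A direct computation from~\eqref{eq:A} gives $AA^{T} = 2I_{2d}$, so the requirement in Step~\ref{step:shift1} reads $g(\mu)\defeq A\,\hat{\Phi}_{\dt}(\iota(z_{n}) + A^{T}\mu) + 2\mu = 0$; by consistency $D\hat{\Phi}_{\dt} = I_{4d} + O(\dt)$, hence $g(0) = O(\dt^{r+1})$ while $g'(\mu) = A\,D\hat{\Phi}_{\dt}\,A^{T} + 2I_{2d} = 4I_{2d} + O(\dt)$ is boundedly invertible for small $\dt$, and the mean value inequality gives $\mu_{\dt}(z_{n}) = O(\dt^{r+1})$. Finally, since $\hat{\Phi}_{\dt}$ is Lipschitz uniformly in small $\dt$,
\begin{equation*}
  \iota\circ\Phi_{\dt}(z_{n}) = \hat{\Phi}_{\dt}(\iota(z_{n}) + A^{T}\mu) + A^{T}\mu = \hat{\Phi}_{\dt}(\iota(z_{n})) + O(\mu) = \iota(\phi_{\dt}(z_{n})) + O(\dt^{r+1}),
\end{equation*}
so $\Phi_{\dt}(z_{n}) = \phi_{\dt}(z_{n}) + O(\dt^{r+1})$, i.e.\ $\Phi$ has at least the order $r$ of $\hat{\Phi}$; inspecting the leading term shows the error coefficient of $\Phi$ is the $\mathcal{N}$-component of that of $\hat{\Phi}$, so generically the order is exactly $r$. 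This last part does not in fact use the symmetry of $\hat{\Phi}$, but combined with it one recovers that a symmetric $\hat{\Phi}$ of (necessarily even) order $r$ produces a symmetric $\Phi$ of the same order.
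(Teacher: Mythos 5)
Your proof is correct, but it takes a more self-contained route than the paper. The paper disposes of both claims by citation: it invokes the general results of \citet[p.~163]{HaLuWa2006} on the symmetric projection method (a symmetric base method with symmetric projection is symmetric and retains its order), applies them to the map $\phi_{\dt}\colon \mathcal{N}\to\mathcal{N}$, $\zeta_{n}\mapsto\zeta_{n+1}$, and then transfers the conclusions to $\Phi_{\dt}$ via the obvious identification of $\mathcal{N}$ with $T^{*}\R^{d}$, using that the extended system restricted to $\mathcal{N}$ is two copies of \eqref{eq:Ham}. You instead reprove the underlying facts in this specific setting: for symmetry you exhibit $-\mu$ as the multiplier for the reversed step and appeal to the local uniqueness from the Implicit Function Theorem in \Cref{prop:existence} (correctly flagging the domain-shrinking bookkeeping, which is exactly the continuity argument needed and works since $\Phi_{0}=\id$ and $\mu_{0}\equiv 0$); for the order you derive the quantitative estimate $\mu_{\dt}(z_{n})=O(\dt^{\,r+1})$ from the invariance of $\mathcal{N}=\ker A$ under the exact extended flow together with $AA^{T}=2I_{2d}$ and the nondegenerate Jacobian $\approx 4I_{2d}$, and then conclude $\Phi_{\dt}=\phi_{\dt}+O(\dt^{\,r+1})$. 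Both arguments are sound; what your version buys is independence from the cited lemma plus extra information (the leading local-error coefficient of $\Phi$ is the projection onto $\mathcal{N}$ of that of $\hat{\Phi}$, and the mechanism behind the small number of projection iterations, namely $\mu=O(\dt^{\,r+1})$, which the paper only argues heuristically in \Cref{ssec:efficiency}), while the paper's version is shorter and leans on a standard reference. One cosmetic caveat: as with the paper's statement, ``same order'' should be read as ``at least the order of $\hat{\Phi}$,'' since the $\mathcal{N}$-component of the leading error term could in principle vanish; your ``generically exactly $r$'' phrasing acknowledges this.
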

\begin{proof}
  Let us first prove the symmetry.
  It is proved in \citet[p.~163]{HaLuWa2006} that a symmetric method combined with the symmetric projection method yields a symmetric method.
  In our setting, this implies that $\phi_{\dt}\colon \mathcal{N} \to \mathcal{N}$ defined by $\zeta_{n} \mapsto \zeta_{n+1}$ (see the diagram below) is symmetric, i.e., $\phi_{-\dt} = \phi_{\dt}^{-1}$.
  \begin{equation*}
    \begin{tikzcd}[column sep=12ex, row sep=7ex, ampersand replacement=\&]
      \zeta_{n} = (q_{n}, q_{n}, p_{n}, p_{n}) \arrow[mapsto, shift left=.5ex]{r}{\phi_{\dt}} \& \zeta_{n+1} = (q_{n+1}, q_{n+1}, p_{n+1}, p_{n+1}) \arrow[mapsto, shift left=.5ex]{l}{\phi_{\dt}^{-1} = \phi_{-\dt}}\\
      z_{n} = (q_{n}, p_{n}) \arrow[mapsto, swap, shift right=0.5ex]{r}{\Phi_{\dt}} \arrow[leftrightarrow]{u}{} \& z_{n+1} = (q_{n+1}, p_{n+1}) \arrow[leftrightarrow]{u}{} \arrow[mapsto, swap, shift right=0.5ex]{l}{\Phi_{\dt}^{-1}}
    \end{tikzcd}
  \end{equation*}
  However, $\phi$ and $\Phi$ are essentially the same map because one may identify $\zeta_{n}$ and $\zeta_{n+1}$ with $z_{n}$ and $z_{n+1}$, respectively, as shown in the diagram.
  In other words, the above diagram commutes.
  Thus the symmetry of $\phi$ implies that of $\Phi$.
  
  For the order of accuracy, again a result from \cite[p.~163]{HaLuWa2006} applied to our setting implies that $\phi$ solves the extended system~\eqref{eq:Ham-extended} on $\mathcal{N}$ with the same order of accuracy as $\hat{\Phi}$.
  However, as we have discussed in \Cref{ssec:extend_phase_space}, the extended system \eqref{eq:Ham-extended} on $\mathcal{N}$ is two identical copies of the original system~\eqref{eq:Ham}.
  Therefore, this implies that $\Phi$ solves \eqref{eq:Ham} with the same order of accuracy as well.
\end{proof}

\subsection{Is $\Phi_{\dt}$ Symplectic?}
\label{ssec:symplecticity0}
Here we shall perform some calculations to show that
\begin{equation*}
  \d{q}_{n+1} \wedge \d{p}_{n+1} = \d{q}_{n} \wedge \d{p}_{n}. 
\end{equation*}
One may consider the calculations to follow as a proof of symplecticity.
However, the argument to follow hardly reveals the geometry behind it because it is somewhat intricate.
Therefore, in \Cref{ssec:symplecticity} below, we shall present the underlying geometry of the method, and give a more geometrically sound proof of the symplecticity than what follows.

Let us write $\mu = (\mu_{1}, \mu_{2}) \in \R^{2d}$ with $\mu_{1}, \mu_{2} \in \R^{d}$.
Then we have
\begin{equation}
  \label{eq:zeta_n+1}
  \zeta_{n+1}
  = (q_{n+1}, q_{n+1}, p_{n+1}, p_{n+1})
  = (
  \hat{q}_{n+1} + \mu_{1},\,
  \hat{x}_{n+1} - \mu_{1},\,
  \hat{p}_{n+1} + \mu_{2},\,
  \hat{y}_{n+1} - \mu_{2}
  ),
\end{equation}
and so
\begin{equation*}
  q_{n+1} - \hat{q}_{n+1} = \mu_{1} = \hat{x}_{n+1} - q_{n+1},
  \qquad
   p_{n+1} - \hat{p}_{n+1} = \mu_{2} = \hat{y}_{n+1} - p_{n+1}.
\end{equation*}
Therefore,
\begin{equation*}
  q_{n+1} = \frac{1}{2}(\hat{q}_{n+1} + \hat{x}_{n+1}),
  \qquad
  p_{n+1} = \frac{1}{2}(\hat{p}_{n+1} + \hat{y}_{n+1}).
\end{equation*}
and thus
\begin{align}
  \label{eq:Omega1}
  \d{q}_{n+1} \wedge \d{p}_{n+1}
  &= \frac{1}{4}\parentheses{
    \d\hat{q}_{n+1} \wedge \d\hat{p}_{n+1} + \d\hat{x}_{n+1} \wedge \d\hat{y}_{n+1}
    + \d\hat{q}_{n+1} \wedge \d\hat{y}_{n+1} + \d\hat{x}_{n+1} \wedge \d\hat{p}_{n+1}
  } \nonumber\\
  &= \frac{1}{4}\parentheses{
    \d\hat{q}_{n} \wedge \d\hat{p}_{n} + \d\hat{x}_{n} \wedge \d\hat{y}_{n}
    + \d\hat{q}_{n+1} \wedge \d\hat{y}_{n+1} + \d\hat{x}_{n+1} \wedge \d\hat{p}_{n+1}
  },
\end{align}
where we also used the symplecticity of $\hat{\Phi}_{\dt}\colon \hat{\zeta}_{n} \mapsto \hat{\zeta}_{n+1}$ on the extended phase space $T^{*}\R^{2d}$, i.e.,
\begin{equation}
  \label{eq:symplecticity-hatPhi}
  \d\hat{q}_{n+1} \wedge \d\hat{p}_{n+1} + \d\hat{x}_{n+1} \wedge \d\hat{y}_{n+1}
  = \d\hat{q}_{n} \wedge \d\hat{p}_{n} + \d\hat{x}_{n} \wedge \d\hat{y}_{n}.
\end{equation}

Now, from \eqref{eq:zeta_n+1}, we also have
\begin{align*}
  \hat{x}_{n+1} - \hat{q}_{n+1} &= (q_{n+1} + \mu_{1}) - (q_{n+1} - \mu_{1}) = 2\mu_{1}, \\
  \hat{y}_{n+1} - \hat{p}_{n+1} &= (p_{n+1} + \mu_{2}) - (p_{n+1} - \mu_{2}) = 2\mu_{2}.
\end{align*}
On the other hand, recall that we also have
\begin{equation}
  \label{eq:zeta_n}
  \hat{\zeta}_{n}
  = (\hat{q}_{n},  \hat{x}_{n}, \hat{p}_{n}, \hat{y}_{n})
  = (
  q_{n} + \mu_{1},\,
  q_{n} - \mu_{1},\,
  p_{n} + \mu_{2},\,
  p_{n} - \mu_{2}
  ),
\end{equation}
and so, similarly,
\begin{equation*}
  \hat{q}_{n} - \hat{x}_{n} = (q_{n} + \mu_{1}) - (q_{n} - \mu_{1}) = 2\mu_{1},
  \qquad
  \hat{p}_{n} - \hat{y}_{n} = (p_{n} + \mu_{2}) - (p_{n} - \mu_{2}) = 2\mu_{2}.
\end{equation*}
Therefore, we obtain
\begin{equation*}
  \hat{x}_{n+1} - \hat{q}_{n+1} = \hat{q}_{n} - \hat{x}_{n},
  \qquad
  \hat{y}_{n+1} - \hat{p}_{n+1} = \hat{p}_{n} - \hat{y}_{n},
\end{equation*}
and so
\begin{equation*}
  \d(\hat{x}_{n+1} - \hat{q}_{n+1}) \wedge \d(\hat{y}_{n+1} - \hat{p}_{n+1})
  = \d(\hat{q}_{n} - \hat{x}_{n}) \wedge \d(\hat{p}_{n} - \hat{y}_{n}).
\end{equation*}
Subtracting \eqref{eq:symplecticity-hatPhi} from this identity,
\begin{equation*}
  \d\hat{q}_{n+1} \wedge \d\hat{y}_{n+1}
  + \d\hat{x}_{n+1} \wedge \d\hat{p}_{n+1}
  = \d\hat{q}_{n} \wedge \d\hat{y}_{n}
  + \d\hat{x}_{n} \wedge \d\hat{p}_{n}.
\end{equation*}
Therefore, \eqref{eq:Omega1} now gives
\begin{equation*}
  \d{q}_{n+1} \wedge \d{p}_{n+1}
  = \frac{1}{4}\parentheses{
    \d\hat{q}_{n} \wedge \d\hat{p}_{n} + \d\hat{x}_{n} \wedge \d\hat{y}_{n}
    + \d\hat{q}_{n} \wedge \d\hat{y}_{n} + \d\hat{x}_{n} \wedge \d\hat{p}_{n}
  },
\end{equation*}

On the other hand, \eqref{eq:zeta_n} also gives
\begin{equation*}
  q_{n} = \frac{1}{2}(\hat{q}_{n} + \hat{x}_{n}),
  \qquad
  p_{n} = \frac{1}{2}(\hat{p}_{n} + \hat{y}_{n}),
\end{equation*}
and thus
\begin{equation*}
  \d{q}_{n} \wedge \d{p}_{n}
  = \frac{1}{4}\parentheses{
    \d\hat{q}_{n} \wedge \d\hat{p}_{n} + \d\hat{x}_{n} \wedge \d\hat{y}_{n}
    + \d\hat{q}_{n} \wedge \d\hat{y}_{n} + \d\hat{x}_{n} \wedge \d\hat{p}_{n}
  }.
\end{equation*}
As a result, we obtain
\begin{equation*}
  \d{q}_{n+1} \wedge \d{p}_{n+1} = \d{q}_{n} \wedge \d{p}_{n}. \qedhere
\end{equation*}

\section{Geometry of Integrator and Symplecticity}
\label{sec:symplecticity}
As mentioned at the beginning of \Cref{ssec:symplecticity0}, the calculations performed above hardly reveals the geometry involved in the method in proving the symplecticity of the method.
In this section, we first provide a geometric interpretation of the method.
This will help us construct a more natural proof of symplecticity.

\subsection{Geometry of Integrator $\Phi_{\dt}$}
Building on \Cref{prop:existence}, we first would like to write our integrator $\Phi_{\dt}$ as a composition of some maps.

Let us first consider the step
\begin{equation}
  \label{eq:hatzeta_n-zeta_n}
  \zeta_{n} \mapsto \hat{\zeta}_{n} = \zeta_{n} + A^{T}\mu.
\end{equation}
We can show that $\hat{\zeta}_{n}$ is in a certain submanifold of $T^{*}\R^{2d}$:
\begin{lemma}
  Define
  \begin{equation}
    \label{eq:mathcalM}
    \mathcal{M} \defeq \setdef{ \zeta \in T^{*}\R^{2d} }{ A \circ \hat{\Phi}_{\dt}(\zeta) = -A \zeta },
  \end{equation}
  where $\hat{\Phi}_{\dt}$ is written in terms of the coordinates $(q, x, p, y)$.
  Then:
  \begin{enumerate}
    \renewcommand{\theenumi}{\roman{enumi}}
  \item For a small enough $|\dt| > 0$, $\mathcal{M}$ is a $2d$-dimensional submanifold of $T^{*}\R^{2d} \cong \R^{4d}$.
  \item $\hat{\zeta}_{n} \in \mathcal{M}$.
  \end{enumerate}
\end{lemma}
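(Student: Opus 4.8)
The plan is to verify both claims by examining the defining equation of $\mathcal{M}$, namely $G(\zeta) \defeq A\hat{\Phi}_{\dt}(\zeta) + A\zeta = 0$, as a map $G\colon T^{*}\R^{2d} \to \R^{2d}$. For claim (i), I would apply the regular value theorem: it suffices to show that $DG(\zeta)$ is surjective (equivalently, full-rank $2d$) for all $\zeta \in \mathcal{M}$ when $|\dt|$ is small. We compute $DG(\zeta) = A\,D\hat{\Phi}_{\dt}(\zeta) + A$. At $\dt = 0$ the integrator $\hat{\Phi}_{0}$ is the identity, so $D\hat{\Phi}_{0}(\zeta) = I_{4d}$ and hence $DG(\zeta)\big|_{\dt=0} = 2A$, which is full-rank $2d$ since $A$ is full-rank (as already noted after \eqref{eq:A}). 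By continuity of $D\hat{\Phi}_{\dt}$ in $\dt$ — and using that on a suitable compact neighborhood the rank is a lower-semicontinuous, hence locally stable, condition — the matrix $A\,D\hat{\Phi}_{\dt}(\zeta) + A$ remains full-rank for all $\zeta$ in that neighborhood once $|\dt|$ is small enough. Thus $0$ is a regular value and $\mathcal{M} = G^{-1}(0)$ is a submanifold of codimension $2d$, i.e., of dimension $2d$. (A mild technical point to address: one should localize near a given base point, as in \Cref{prop:existence}, so that "small enough $|\dt|$" and the neighborhood are chosen compatibly; this is routine.)

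For claim (ii), I would simply unwind the definition of $\hat{\zeta}_{n}$ from \Cref{def:method} together with the projection condition. By construction, $\mu$ is chosen so that $\hat{\Phi}_{\dt}(\zeta_{n} + A^{T}\mu) + A^{T}\mu \in \mathcal{N} = \ker A$, i.e., $A\bigl(\hat{\Phi}_{\dt}(\hat{\zeta}_{n}) + A^{T}\mu\bigr) = 0$, so $A\hat{\Phi}_{\dt}(\hat{\zeta}_{n}) = -A A^{T}\mu$. On the other hand, since $\zeta_{n} \in \mathcal{N} = \ker A$, we have $A\hat{\zeta}_{n} = A\zeta_{n} + A A^{T}\mu = A A^{T}\mu$. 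Combining the two displays gives $A\hat{\Phi}_{\dt}(\hat{\zeta}_{n}) = -A\hat{\zeta}_{n}$, which is exactly the condition defining $\mathcal{M}$ in \eqref{eq:mathcalM}. Hence $\hat{\zeta}_{n} \in \mathcal{M}$.

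I do not expect a serious obstacle here; the argument is essentially a regular-value computation plus a one-line algebraic check. The only mildly delicate point is making the quantifiers in (i) precise — that is, stating clearly that the submanifold claim holds locally (on a neighborhood $U$ of a base point, for $|\dt| < \eps$ depending on $U$), matching the local setup of \Cref{prop:existence} — and invoking the stability of maximal rank under small perturbations of $\dt$ rather than claiming it globally on all of $T^{*}\R^{2d}$. Everything else is immediate once $D\hat{\Phi}_{0} = I_{4d}$ and the full rank of $A$ are in hand.
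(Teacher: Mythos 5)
Your proposal is correct and follows essentially the same route as the paper: part (i) is the same regular-value argument for $G(\zeta) = A\hat{\Phi}_{\dt}(\zeta) + A\zeta$ using $D\hat{\Phi}_{0} = I_{4d}$ and the full rank of $A$, and part (ii) is the same algebraic unwinding of the projection condition with $A^{T}\mu = \hat{\zeta}_{n} - \zeta_{n}$ and $\zeta_{n}, \zeta_{n+1} \in \ker A$. Your extra remark about localizing the small-$\dt$ rank condition is a reasonable refinement of a point the paper treats informally, but it does not change the argument.
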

\begin{proof}
  (i)~Define
  \begin{equation*}
    G\colon T^{*}\R^{2d} \cong \R^{4d} \to \R^{2d};
    \qquad
    G(\zeta) \defeq A \circ \hat{\Phi}_{\dt}(\zeta) + A \zeta.
  \end{equation*}
  Then $\mathcal{M} = G^{-1}(0)$.
  So it suffices to show that the following differential of $G$ is full-rank:
  \begin{equation*}
    DG(\zeta) = A \circ D\hat{\Phi}_{\dt}(\zeta) + A = A \bigl( D\hat{\Phi}_{\dt}(\zeta) + I_{4d} \bigr).
  \end{equation*}
  Since $D\hat{\Phi}_{0} = I_{4d}$, for a small enough $|\dt| > 0$, $D\hat{\Phi}_{\dt}(\zeta) + I_{4d}$ is invertible.
  However, since $A$ is full-rank, this implies that $DG(\zeta)$ is full-rank for any $\zeta \in T^{*}\R^{2d}$ as well.
  
  (ii)~In view of \Cref{def:method} and \eqref{eq:nonlinear_eqs}, $\hat{\zeta}_{n}$ satisfies
  \begin{equation*}
    \zeta_{n+1} = \hat{\Phi}_{\dt}\bigl( \hat{\zeta}_{n} \bigr) + A^{T}\mu
    \quad\text{and}\quad
    A \zeta_{n+1} = 0.
  \end{equation*}
  However, since \eqref{eq:hatzeta_n-zeta_n} gives $A^{T} \mu = \hat{\zeta}_{n} - \zeta_{n}$, we have
  \begin{equation*}
    \zeta_{n+1} = \hat{\Phi}_{\dt}\bigl( \hat{\zeta}_{n} \bigr) + \hat{\zeta}_{n} - \zeta_{n}
    \quad\text{and}\quad
    A \zeta_{n+1} = 0.
  \end{equation*}
  Multiplying $A$ from the left to the first equality and noting that $\zeta_{n}, \zeta_{n+1} \in \mathcal{N} = \ker A$, we obtain
  \begin{equation*}
    0 = A \circ \hat{\Phi}_{\dt}\bigl( \hat{\zeta}_{n} \bigr) + A \hat{\zeta}_{n},
  \end{equation*}
  that is, $\hat{\zeta}_{n} \in \mathcal{M}$.
\end{proof}
As a result, using the map $\mu_{(\,\cdot\,)}\colon (-\eps,\eps) \times U \to T^{*}\R^{2d}$ introduced in \Cref{prop:existence}, this step is described by the map
\begin{equation*}
  \sigma_{\dt} \colon \mathcal{N} \to \mathcal{M};
  \qquad
  (q,q,p,p) \mapsto (q,q,p,p) + A^{T} \mu_{\dt}(q,p).
\end{equation*}
We shall also need the following embedding later:
\begin{equation*}
  i_{\mathcal{M}}\colon \mathcal{M} \hookrightarrow T^{*}\R^{2d}.
\end{equation*}

Let us next look at the step
\begin{equation*}
  \hat{\zeta}_{n+1} \mapsto \zeta_{n+1} = \hat{\zeta}_{n+1} + A^{T}\mu,
\end{equation*}
or
\begin{equation*}
  (\hat{q}_{n+1}, \hat{x}_{n+1}, \hat{p}_{n+1}, \hat{y}_{n+1})
  \mapsto
  (\hat{q}_{n+1} + \mu_{1},\,
  \hat{x}_{n+1} - \mu_{1},\,
  \hat{p}_{n+1} + \mu_{2},\,
  \hat{y}_{n+1} - \mu_{2}
  ).
\end{equation*}
Notice that, since $\zeta_{n+1} \in \mathcal{N}$, we may equate the first two terms as well as the last two on the right-hand side to have
\begin{equation*}
  \mu_{1} = \frac{1}{2}(\hat{x}_{n+1} - \hat{q}_{n+1})
  \qquad
  \mu_{2} = \frac{1}{2}(\hat{y}_{n+1} - \hat{p}_{n+1}).
\end{equation*}
Therefore, one may rewrite this step as
\begin{equation*}
  (\hat{q}_{n+1}, \hat{x}_{n+1}, \hat{p}_{n+1}, \hat{y}_{n+1})
  \mapsto
  \parentheses{
    \frac{\hat{q}_{n+1} + \hat{x}_{n+1}}{2},
    \frac{\hat{q}_{n+1} + \hat{x}_{n+1}}{2},
    \frac{\hat{p}_{n+1} + \hat{y}_{n+1}}{2},
    \frac{\hat{p}_{n+1} + \hat{y}_{n+1}}{2}
  }.
\end{equation*}
This motivates us to define the map
\begin{equation*}
  \kappa\colon T^{*}\R^{2d} \to \mathcal{N} \subset T^{*}\R^{2d};
  \qquad
  (q, x, p, y) \mapsto \parentheses{ \frac{q + x}{2}, \frac{q + x}{2}, \frac{p + y}{2}, \frac{p + y}{2} }.
\end{equation*}
so that
\begin{equation*}
  \zeta_{n+1} = \kappa( \hat{\zeta}_{n} ).
\end{equation*}

Finally, in order to describe the last step
\begin{equation*}
  \zeta_{n+1} = (q_{n}, q_{n}, p_{n}, p_{n})
  \mapsto
  z_{n} = (q_{n}, p_{n}),
\end{equation*}
let us define
\begin{equation*}
  \varpi\colon \mathcal{N} \to T^{*}\R^{d};
  \qquad
  (q, q, p, p) \mapsto (q, p).
\end{equation*}

Putting all of the maps together, we have the diagram below for our integrator $\Phi$.
\begin{equation*}
  \begin{tikzcd}[column sep=8ex, row sep=7ex, ampersand replacement=\&]
    T^{*}\R^{2d} \arrow{r}{\hat{\Phi}_{\dt}} \& T^{*}\R^{2d} \arrow{dd}{\kappa} \\
    \mathcal{M} \arrow[hook]{u}{i_{\mathcal{M}}} \&  \\
    \iota(U) \arrow{u}{\sigma_{\dt}} \subset \mathcal{N} \& \mathcal{N} \arrow{d}{\varpi} \\
    U \arrow[swap]{r}{\Phi_{\dt}} \arrow{u}{\iota} \& T^{*}\R^{d}
  \end{tikzcd}
  \qquad
  \begin{tikzcd}[column sep=8ex, row sep=7ex, ampersand replacement=\&]
    (\hat{q}_{n}, \hat{x}_{n}, \hat{p}_{n}, \hat{y}_{n}) \arrow[mapsto]{r}{} \& (\hat{q}_{n+1}, \hat{x}_{n+1}, \hat{p}_{n+1}, \hat{y}_{n+1}) \arrow[mapsto]{dd}{} \\
    (\hat{q}_{n}, \hat{x}_{n}, \hat{p}_{n}, \hat{y}_{n}) \arrow[mapsto]{u}{} \& \\
    (q_{n}, q_{n}, p_{n}, p_{n}) \arrow[mapsto]{u}{} \& (q_{n+1}, q_{n+1}, p_{n+1}, p_{n+1})  \arrow[mapsto]{d}{} \\
    (q_{n}, p_{n}) \arrow[mapsto,swap]{r}{} \arrow[mapsto]{u}{} \& (q_{n+1}, p_{n+1})
  \end{tikzcd}
\end{equation*}
In other words, we may write the integrator $\Phi_{\dt}$ as follows:
\begin{equation}
  \label{eq:Phi}
  \Phi_{\dt} = \varpi \circ \kappa \circ \hat{\Phi}_{\dt} \circ i_{\mathcal{M}} \circ \sigma_{\dt} \circ \iota.
\end{equation}

\subsection{Symplecticity of $\Phi_{\dt}$}
\label{ssec:symplecticity}
Now we are ready to prove our main result:
\begin{theorem}[Symplecticity of $\Phi_{\dt}$]
  \label{thm:symplecticity}
  The integrator $\Phi_{\dt}\colon T^{*}\R^{d} \to T^{*}\R^{d}$ is symplectic, i.e.,
  \begin{equation*}
    \Phi_{\dt}^{*} \Omega = \Omega.
  \end{equation*}
\end{theorem}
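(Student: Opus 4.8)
The plan is to exploit the decomposition $\Phi_{\dt} = \varpi \circ \kappa \circ \hat{\Phi}_{\dt} \circ i_{\mathcal{M}} \circ \sigma_{\dt} \circ \iota$ from \eqref{eq:Phi}, together with the hypothesis that $\hat{\Phi}_{\dt}$ preserves $\hat{\Omega}$, and show that the auxiliary maps fit together so that the pullback of $\Omega$ along $\Phi_{\dt}$ reduces to the pullback of $\hat{\Omega}$ along $\hat{\Phi}_{\dt}$. The conceptual point I would like to make precise is that $\mathcal{M}$ carries a natural symplectic form and that $\sigma_{\dt}\circ\iota$ and $\varpi\circ\kappa$ are, up to the obvious identifications, symplectomorphisms onto and from $(\mathcal{M},\omega_{\mathcal{M}})$, where $\omega_{\mathcal{M}} \defeq i_{\mathcal{M}}^{*}\hat{\Omega}$.

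First I would observe the key coordinate fact already isolated in \Cref{ssec:symplecticity0}: on $\mathcal{M}$ the map $\hat{\Phi}_{\dt}$ sends $A\zeta$ to $-A\zeta$ (this is the defining property \eqref{eq:mathcalM}), so if $\zeta = (\hat q,\hat x,\hat p,\hat y)$ and $\hat{\Phi}_{\dt}(\zeta) = (\hat q',\hat x',\hat p',\hat y')$ then $\hat x' - \hat q' = \hat q - \hat x$ and $\hat y' - \hat p' = \hat p - \hat y$. Pulling $\hat{\Omega}$ back along $\hat{\Phi}_{\dt}$ gives $\d\hat q'\wedge\d\hat p' + \d\hat x'\wedge\d\hat y' = \d\hat q\wedge\d\hat p + \d\hat x\wedge\d\hat y$, while differentiating the two relations above and wedging gives $\d(\hat x'-\hat q')\wedge\d(\hat y'-\hat p') = \d(\hat q - \hat x)\wedge\d(\hat p - \hat y)$. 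Subtracting, exactly as in \Cref{ssec:symplecticity0}, yields $\d\hat q'\wedge\d\hat y' + \d\hat x'\wedge\d\hat p' = \d\hat q\wedge\d\hat y + \d\hat x\wedge\d\hat p$. Adding this to the first identity and dividing by $4$ shows
\begin{equation*}
  \kappa^{*}\Omega\big|_{\hat{\Phi}_{\dt}(\zeta)} = \tfrac14\bigl(\d\hat q'\wedge\d\hat p' + \d\hat x'\wedge\d\hat y' + \d\hat q'\wedge\d\hat y' + \d\hat x'\wedge\d\hat p'\bigr) = \tfrac14\bigl(\d\hat q\wedge\d\hat p + \d\hat x\wedge\d\hat y + \d\hat q\wedge\d\hat y + \d\hat x\wedge\d\hat p\bigr),
\end{equation*}
where I have used $\varpi^{*}\Omega = \d q\wedge\d p$ pulled back through $\kappa$ and written everything as forms restricted to $\mathcal{M}$. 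The right-hand side is precisely $\tfrac14$ of $i_{\mathcal{M}}^{*}\hat{\Omega}$ plus the "cross" term $\d\hat q\wedge\d\hat y + \d\hat x\wedge\d\hat p$; I would then show that this cross term, restricted to $\mathcal{M}$, equals $i_{\mathcal{M}}^{*}\hat{\Omega}$ itself, again using $\d(\hat q-\hat x)\wedge\d(\hat p-\hat y) = -\bigl(\d(\hat x'-\hat q')\wedge\d(\hat p'-\hat y')\bigr)$ together with the symplecticity of $\hat{\Phi}_{\dt}$ evaluated the other way, so that the whole expression collapses to $\tfrac12 i_{\mathcal{M}}^{*}\hat{\Omega}$. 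In short, $(\kappa\circ\hat{\Phi}_{\dt}\circ i_{\mathcal{M}})^{*}(\varpi^{*}\Omega)$ agrees with $\tfrac12 i_{\mathcal{M}}^{*}\hat{\Omega}$ on $\mathcal{M}$.

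Next I would handle the left half of the composition. By the identical computation applied at step $n$ rather than $n+1$ — the relations $\hat q - \hat x = 2\mu_1$, $\hat p - \hat y = 2\mu_2$ from \eqref{eq:zeta_n} show $\sigma_{\dt}\circ\iota$ lands in $\mathcal{M}$ and that $q_n = \tfrac12(\hat q + \hat x)$, $p_n = \tfrac12(\hat p + \hat y)$ — one gets $(\sigma_{\dt}\circ\iota)^{*}\bigl(\tfrac12 i_{\mathcal{M}}^{*}\hat{\Omega}\bigr) = \d q_n\wedge\d p_n = \Omega$. Composing the two halves,
\begin{equation*}
  \Phi_{\dt}^{*}\Omega = (\sigma_{\dt}\circ\iota)^{*}\circ(i_{\mathcal{M}}^{*})\circ(\hat{\Phi}_{\dt}^{*})\circ\kappa^{*}\circ\varpi^{*}\,\Omega = (\sigma_{\dt}\circ\iota)^{*}\bigl(\tfrac12 i_{\mathcal{M}}^{*}\hat{\Omega}\bigr) = \Omega,
\end{equation*}
which is the claim. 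The main obstacle I anticipate is making rigorous the claim that the "cross term" $\d\hat q\wedge\d\hat y + \d\hat x\wedge\d\hat p$ equals $i_{\mathcal{M}}^{*}\hat{\Omega}$ on $\mathcal{M}$: this is where the defining equation of $\mathcal{M}$ and the symplecticity of $\hat{\Phi}_{\dt}$ must be combined carefully, keeping track of which point ($\hat{\zeta}_n$ or $\hat{\zeta}_{n+1}$) each form is evaluated at, since $\mathcal{M}$ is not $\hat{\Phi}_{\dt}$-invariant. A cleaner route, which I would pursue if the direct bookkeeping gets unwieldy, is to introduce the involution $R(q,x,p,y) = (x,q,y,p)$, note $R^{*}\hat{\Omega} = \hat{\Omega}$ and that $\zeta\in\mathcal{M}$ iff $\hat{\Phi}_{\dt}(\zeta) = R\circ S(\zeta)$ for an appropriate affine map $S$ fixing $\mathcal{N}$, and then read off the form identities from functoriality of pullback; this repackages the same linear algebra but makes the cancellations transparent.
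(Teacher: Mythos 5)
Your decomposition \eqref{eq:Phi} and the first half of your argument are exactly the paper's route: $(\varpi\circ\kappa)^{*}\Omega=\frac{1}{4}(\hat{\Omega}+\Xi)$ with $\Xi\defeq\d{q}\wedge\d{y}+\d{x}\wedge\d{p}$, and $i_{\mathcal{M}}^{*}\hat{\Phi}_{\dt}^{*}(\hat{\Omega}+\Xi)=i_{\mathcal{M}}^{*}(\hat{\Omega}+\Xi)$, which is \Cref{lem:Xi}. The genuine gap is the step you yourself flag as the anticipated obstacle: the claim that the cross term equals $i_{\mathcal{M}}^{*}\hat{\Omega}$, i.e.\ that $i_{\mathcal{M}}^{*}(\hat{\Omega}-\Xi)=0$, so that $\frac{1}{4}i_{\mathcal{M}}^{*}(\hat{\Omega}+\Xi)$ "collapses" to $\frac{1}{2}i_{\mathcal{M}}^{*}\hat{\Omega}$. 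Since $\hat{\Omega}-\Xi=\d(q-x)\wedge\d(p-y)$, this asserts that $\mathcal{M}$ is isotropic for that degenerate 2-form, which is false in general: $\mathcal{M}$ is only a small perturbation of $\mathcal{N}$, not a subset of it. Concretely, pulling back through $i_{\mathcal{M}}\circ\sigma_{\dt}\circ\iota(q,p)=(q+\mu_{1},\,q-\mu_{1},\,p+\mu_{2},\,p-\mu_{2})$ gives $\iota^{*}\sigma_{\dt}^{*}i_{\mathcal{M}}^{*}(\hat{\Omega}-\Xi)=4\,\d\mu_{1}\wedge\d\mu_{2}$, which does not vanish identically (already for $d=1$, $H(q,p)=qp$ with the Strang splitting \eqref{eq:Strang}, $\mathcal{M}$ is a computable linear subspace on which $\mu_{1}$ and $\mu_{2}$ are nonzero multiples of $q$ and $p$, so $\d\mu_{1}\wedge\d\mu_{2}\neq0$; the restriction of $\hat{\Omega}-\Xi$ to $\mathcal{M}$ is of size $O(\dt^{6})$ but nonzero). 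The defining relation of $\mathcal{M}$ only yields $\d(\hat{q}-\hat{x})\wedge\d(\hat{p}-\hat{y})=\d(\hat{q}'-\hat{x}')\wedge\d(\hat{p}'-\hat{y}')$ on $\mathcal{M}$, a transport identity, not vanishing. Your subsequent claim $(\sigma_{\dt}\circ\iota)^{*}\bigl(\frac{1}{2}i_{\mathcal{M}}^{*}\hat{\Omega}\bigr)=\Omega$ is false by exactly the same term: that pullback equals $\d{q}\wedge\d{p}+\d\mu_{1}\wedge\d\mu_{2}$.

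These two errors cancel, which is why you land on the true statement, but as written the proof is not valid. The repair is simply not to split the combination: carry $\hat{\Omega}+\Xi$ intact through the last pullback, where the $\pm\d\mu_{1}\wedge\d\mu_{2}$ contributions from the $\hat{\Omega}$-part and the $\Xi$-part cancel, giving $\iota^{*}\sigma_{\dt}^{*}i_{\mathcal{M}}^{*}(\hat{\Omega}+\Xi)=4\,\d{q}\wedge\d{p}$ and hence $\Phi_{\dt}^{*}\Omega=\Omega$; this is precisely the paper's proof (\Cref{lem:Xi} followed by that final computation). Your proposed "cleaner route" does not rescue the false identity either: from \eqref{eq:mathcalM} one only gets $\hat{\Phi}_{\dt}(\zeta)=R\zeta+\nu(\zeta)$ with $R(q,x,p,y)=(x,q,y,p)$ and $\nu(\zeta)\in\mathcal{N}$ depending on $\zeta$, not $R\circ S$ for a fixed affine $S$, so no functoriality argument along those lines yields $i_{\mathcal{M}}^{*}\Xi=i_{\mathcal{M}}^{*}\hat{\Omega}$.
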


Let us first prove the following lemma:
\begin{lemma}
  \label{lem:Xi}  
  Let us define the following 2-form on $T^{*}\R^{2d}$:
  \begin{equation*}
    \Xi \defeq \d{q} \wedge \d{y} + \d{x} \wedge \d{p}.
  \end{equation*}
  Then
  \begin{equation}
    \label{eq:Omega-Xi}
    \kappa^{*} \varpi^{*} \Omega = \frac{1}{4}(\hat{\Omega} + \Xi)
  \end{equation}
  and
  \begin{equation}
    \label{eq:Xi-pullbacks}
    i_\mathcal{M}^{*} \hat{\Phi}_{\dt}^{*} \Xi = i_\mathcal{M}^{*} \Xi.
  \end{equation}
\end{lemma}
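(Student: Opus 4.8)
\textbf{Proof plan for \Cref{lem:Xi}.}
The two identities are of rather different character, so I would treat them separately. For \eqref{eq:Omega-Xi}, the plan is a direct pull-back computation. The map $\varpi\circ\kappa\colon T^{*}\R^{2d}\to T^{*}\R^{d}$ sends $(q,x,p,y)$ to $\bigl(\tfrac{q+x}{2},\tfrac{p+y}{2}\bigr)$, so that $(\varpi\circ\kappa)^{*}\d q = \tfrac12(\d q+\d x)$ and $(\varpi\circ\kappa)^{*}\d p = \tfrac12(\d p+\d y)$, using here the coordinate names on $T^{*}\R^{d}$ loosely. Wedging these together gives
\begin{equation*}
  \kappa^{*}\varpi^{*}\Omega = \tfrac14(\d q+\d x)\wedge(\d p+\d y)
  = \tfrac14\bigl(\d q\wedge\d p + \d x\wedge\d y + \d q\wedge\d y + \d x\wedge\d p\bigr)
  = \tfrac14(\hat\Omega+\Xi),
\end{equation*}
which is exactly \eqref{eq:Omega-Xi}. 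This step is routine and I expect no obstacle; it merely records the algebraic identity already implicit in the computations of \Cref{ssec:symplecticity0}.

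For \eqref{eq:Xi-pullbacks}, the key observation is that $\Xi = \hat\Omega - (A^{*}\omega_{0})$ where $\omega_{0}$ is the standard symplectic form on $\R^{2d}$ viewed via the splitting $A(q,x,p,y)=(q-x,p-y)$; more concretely, $\d(q-x)\wedge\d(p-y) = \hat\Omega - \Xi$ by the same expansion as above, so $\Xi = \hat\Omega - \d(q-x)\wedge\d(p-y)$. Since $\hat\Phi_{\dt}$ is symplectic, $\hat\Phi_{\dt}^{*}\hat\Omega=\hat\Omega$, so it suffices to show that $i_{\mathcal M}^{*}\hat\Phi_{\dt}^{*}\bigl(\d(q-x)\wedge\d(p-y)\bigr) = i_{\mathcal M}^{*}\bigl(\d(q-x)\wedge\d(p-y)\bigr)$, i.e.\ that the 2-form $A^{*}\omega_{0}$ is preserved when pulled back to $\mathcal M$. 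But $\mathcal M$ was defined precisely by $A\circ\hat\Phi_{\dt} = -A$ on $\mathcal M$ (see \eqref{eq:mathcalM}), so on $\mathcal M$ the composite $A\circ\hat\Phi_{\dt}\circ i_{\mathcal M}$ equals $-A\circ i_{\mathcal M}$; hence $(A\circ\hat\Phi_{\dt}\circ i_{\mathcal M})^{*}\omega_{0} = (-A\circ i_{\mathcal M})^{*}\omega_{0} = (A\circ i_{\mathcal M})^{*}\omega_{0}$ because $\omega_{0}$ is bilinear (the sign squares away). Reading this chain of pull-backs back through $i_{\mathcal M}^{*}\hat\Phi_{\dt}^{*}A^{*}\omega_{0} = i_{\mathcal M}^{*}A^{*}\omega_{0}$ and combining with $\hat\Phi_{\dt}^{*}\hat\Omega=\hat\Omega$ yields \eqref{eq:Xi-pullbacks}.

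The main obstacle, such as it is, lies in making the second part rigorous rather than formal: one must be careful that "$A\circ\hat\Phi_{\dt} = -A$ holds on $\mathcal M$" is an identity of maps $\mathcal M\to\R^{2d}$, so that it can legitimately be differentiated and pulled back — this is fine because $\mathcal M = G^{-1}(0)$ with $G(\zeta)=A\hat\Phi_{\dt}(\zeta)+A\zeta$, so $A\hat\Phi_{\dt}\circ i_{\mathcal M} + A\circ i_{\mathcal M} \equiv 0$ as a map on all of $\mathcal M$, and one may apply $d$ and then the exterior-algebra functor freely. The only other point worth stating cleanly is the decomposition $\hat\Omega = \Xi + \d(q-x)\wedge\d(p-y)$; once that is in hand, linearity of the pull-back reduces everything to the two facts just used, namely symplecticity of $\hat\Phi_{\dt}$ in the extended phase space and the defining relation of $\mathcal M$. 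I would present \eqref{eq:Omega-Xi} first as a warm-up and then \eqref{eq:Xi-pullbacks}, and in the subsequent proof of \Cref{thm:symplecticity} combine them via the factorization \eqref{eq:Phi}: pull $\Omega$ back along $\varpi\circ\kappa$ to get $\tfrac14(\hat\Omega+\Xi)$, pull that back along $\hat\Phi_{\dt}\circ i_{\mathcal M}$ using symplecticity of $\hat\Phi_{\dt}$ on the $\hat\Omega$ part and \eqref{eq:Xi-pullbacks} on the $\Xi$ part, and finally recognize $\sigma_{\dt}^{*}\iota^{*}$ of the resulting $\tfrac14(\hat\Omega+\Xi)$ restricted to $\mathcal N$ as $\Omega$ again — the last recognition being the mirror image of \eqref{eq:Omega-Xi} restricted to $\mathcal N$, where $\Xi$ and $\hat\Omega$ agree.
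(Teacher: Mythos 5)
Your proposal is correct and follows essentially the same route as the paper: your decomposition $\Xi = \hat{\Omega} - \d(q-x)\wedge\d(p-y)$ is exactly the paper's matrix identity $\mathbb{X} = \mathbb{J}_{4d} - A^{T}\mathbb{J}_{2d}A$, and both arguments combine the symplecticity of $\hat{\Phi}_{\dt}$ with the (differentiated) defining relation of $\mathcal{M}$, with the minus sign squaring away. The only difference is presentational — you phrase the computation via functoriality of pull-backs of the constant-coefficient form $A^{*}\omega_{0}$, while the paper carries out the same cancellation pointwise on tangent vectors in matrix notation; your part for \eqref{eq:Omega-Xi} coincides with the paper's verbatim.
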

\begin{proof}
  Let us first prove \eqref{eq:Omega-Xi}.
  We may write
  \begin{equation*}
    \varpi \circ \kappa(q, x, p, y) = \parentheses{ \frac{q + x}{2}, \frac{p + y}{2} }.
  \end{equation*}
  So the pull-back of $\Omega$ by this map yields
  \begin{align*}
    \kappa^{*} \varpi^{*} \Omega
    &= (\varpi \circ \kappa)^{*} \Omega \\
    &= \frac{1}{4} (\d{q} + \d{x}) \wedge (\d{p} + \d{y}) \\
    &= \frac{1}{4} (\d{q} \wedge \d{p} + \d{x} \wedge \d{y} + \d{q} \wedge \d{y} + \d{x} \wedge \d{p}) \\
    &= \frac{1}{4}(\hat{\Omega} + \Xi).
  \end{align*}

  Let us next prove \eqref{eq:Xi-pullbacks}.
  It suffices to show that, for any $\zeta \in \mathcal{M}$ and any tangent vectors $v, w \in T_{\zeta}\mathcal{M}$,
  \begin{equation*}
    (\hat{\Phi}_{\dt}^{*} \Xi)_{\zeta}(v,w) = \Xi_{\zeta}(v,w).
  \end{equation*}
  
  Let us identify $v, w \in T_{\zeta}\mathcal{M}$ with vectors in $\R^{4d}$ written in terms of the coordinates
  \begin{equation*}
    \braces{
      \left. \pd{}{q}\right|_{\zeta},\,
      \left. \pd{}{x}\right|_{\zeta},\,
      \left. \pd{}{p}\right|_{\zeta},\,
      \left. \pd{}{y}\right|_{\zeta}
    }
  \end{equation*}
  for $T_{\zeta}(T^{*}\R^{2d}) \cong \R^{4d}$.
  In view of the definition~\eqref{eq:mathcalM} of the submanifold $\mathcal{M}$, one sees that $v, w \in T_{\zeta}\mathcal{M} \cong \R^{4d}$ satisfy
  \begin{equation*}
    A \circ D\hat{\Phi}_{\dt}(\zeta) v = -A v,
    \qquad
    A \circ D\hat{\Phi}_{\dt}(\zeta) w = -A w.
  \end{equation*}
  
  Now, we may write, using the definition of $\Xi$,
  \begin{equation*}
    \Xi_{\zeta}(v,w) = v^{T} \mathbb{X} w
    \quad
    \text{with}
    \quad
    \mathbb{X} \defeq
    \begin{bmatrix}
      0 & 0 & 0 & I_{d} \\
      0 & 0 & I_{d} & 0 \\
      0 & -I_{d} & 0 & 0 \\
      -I_{d} & 0 & 0 & 0
    \end{bmatrix}.
  \end{equation*}
  Let us set, for any $d \in \N$,
  \begin{equation*}
    \mathbb{J}_{2d} \defeq
    \begin{bmatrix}
      0 & I_{d} \\
      -I_{d} & 0
    \end{bmatrix}.
  \end{equation*}
  Then it is a straightforward calculation to see that
  \begin{equation*}
    A^{T} \mathbb{J}_{2d} A
    = \begin{bmatrix}
      0 & 0 & I_{d} & -I_{d} \\
      0 & 0 & -I_{d} & I_{d} \\
      -I_{d} & I_{d} & 0 & 0 \\
      I_{d} & -I_{d} & 0 & 0
    \end{bmatrix}
    = \mathbb{J}_{4d} - \mathbb{X}
    \iff
    \mathbb{X} = \mathbb{J}_{4d} - A^{T} \mathbb{J}_{2d} A.
  \end{equation*}

  Notice also that $\hat{\Phi}_{\dt} \colon T^{*}\R^{2d} \to T^{*}\R^{2d}$ is symplectic, i.e.,
  \begin{equation*}
    D\hat{\Phi}_{\dt}(\zeta)^{T} \mathbb{J}_{4d} D\Phi_{\dt}(\zeta) = \mathbb{J}_{4d}.
  \end{equation*}
  Therefore, we have
  \begin{align*}
    (\hat{\Phi}_{\dt}^{*} \Xi)_{\zeta}(v,w)
    &= (D\hat{\Phi}_{\dt}(\zeta) v)^{T} \mathbb{X} (D\hat{\Phi}_{\dt}(\zeta) w) \\
    &= v^{T} D\hat{\Phi}_{\dt}(\zeta)^{T} \mathbb{J}_{4d} D\hat{\Phi}_{\dt}(\zeta) w
      - (v^{T} D\hat{\Phi}_{\dt}(\zeta)^{T}) A^{T} \mathbb{J}_{2d} A (D\hat{\Phi}_{\dt}(\zeta) w) \\
    &= v^{T} \mathbb{J}_{4d} w
       - (A \circ D\hat{\Phi}_{\dt}(\zeta) v)^{T} \mathbb{J}_{2d} (A \circ D\hat{\Phi}_{\dt}(\zeta) w)\\
    &=  v^{T} \mathbb{J}_{4d} w
       - (-A v)^{T} \mathbb{J}_{2d} (-A w)\\
    &=  v^{T} \mathbb{J}_{4d} w
      - v^{T} A^{T} \mathbb{J}_{2d} A w\\
    &=  v^{T} \mathbb{J}_{4d} w
      - v^{T} (\mathbb{J}_{4d} - \mathbb{X}) w\\
    &= v^{T} \mathbb{X} w \\
    &= \Xi_{\zeta}(v,w). \qedhere
  \end{align*}
\end{proof}

Now we are ready to prove the main result.
\begin{proof}[Proof of \Cref{thm:symplecticity}]
  Since we may write
  \begin{equation*}
    i_{\mathcal{M}} \circ \sigma_{\dt} \circ \iota (q, p)
    = (q + \mu_{1},\, q - \mu_{1},\, p + \mu_{2},\, p - \mu_{2}),
  \end{equation*}
  we have
  \begin{align*}
    \iota^{*} \sigma_{\dt}^{*} i_{\mathcal{M}}^{*} (\hat{\Omega} + \Xi)
    &= (\d{q} + \d\mu_{1}) \wedge (\d{p} + \d\mu_{2})
      + (\d{q} - \d\mu_{1}) \wedge (\d{p} - \d\mu_{2}) \\
    &\quad+ (\d{q} + \d\mu_{1}) \wedge (\d{p} - \d\mu_{2})
      + (\d{q} - \d\mu_{1}) \wedge (\d{p} + \d\mu_{2}) \\
    &= 4 \d{q} \wedge \d{p} \\
    &= 4 \Omega.
  \end{align*}
  
  Therefore, using the definition~\eqref{eq:Phi}, the equalities \eqref{eq:Omega-Xi} and \eqref{eq:Xi-pullbacks} from \Cref{lem:Xi}, the symplecticity of $\hat{\Phi}_{\dt}$ with respect to $\hat{\Omega}$, and the above equality, we have
  \begin{align*}
    \Phi_{\dt}^{*} \Omega
    &= \iota^{*} \sigma_{\dt}^{*} i_{\mathcal{M}}^{*} \hat{\Phi}_{\dt}^{*} \kappa^{*} \varpi^{*} \Omega \\
    &= \frac{1}{4} \iota^{*} \sigma_{\dt}^{*} i_{\mathcal{M}}^{*} \hat{\Phi}_{\dt}^{*} (\hat{\Omega} + \Xi) \\
    &= \frac{1}{4} \iota^{*} \sigma_{\dt}^{*} i_{\mathcal{M}}^{*} (\hat{\Omega} + \Xi) \\
    &= \Omega. \qedhere
  \end{align*}
\end{proof}

\section{Implementation and Numerical Results}
\label{sec:results}
\subsection{Implementation}
\label{ssec:implementation}
Our base $2^{\rm nd}$-order method uses the extended phase space integrator~\eqref{eq:Strang} of \citet{Pi2015} using the Strang splitting.
We can construct higher-order extended phase space integrators from \eqref{eq:Strang} using the symmetric Triple Jump composition (see \cite{CrGo1989,Forest1989,Su1990,Yo1990}; also \citet[Example~II.4.2]{HaLuWa2006}).
For example, denoting the above $2^{\rm nd}$-order method in \eqref{eq:Strang} by $\hat{\Phi}^{(2)}_{\dt}$, we recursively construct an $n^{\rm th}$-order ($n$ being even) method as follows:
\begin{equation}
  \label{eq:nth-order-tripleJump}
  \hat{\Phi}^{(n)}_{\dt} \defeq \hat{\Phi}^{(n-2)}_{\gamma_{3}\dt} \circ \hat{\Phi}^{(n-2)}_{\gamma_{2}\dt} \circ \hat{\Phi}^{(n-2)}_{\gamma_{1}\dt},
\end{equation}
where
\begin{equation*}
  \gamma_{1} = \gamma_{3} \defeq \frac{1}{2 - 2^{1/(n-1)}},
  \qquad
  \gamma_{2} \defeq -\frac{2^{1/(n-1)}}{2 - 2^{1/(n-1)}}.
\end{equation*}
This results in a $3^{n/2}$-stage method of order $n$.
For example, the $4^{\rm th}$ order Triple Jump composition is a $9$-stage method, whereas the $6^{\rm th}$-order method has $27$ stages.

Another composition technique we have considered is the fractals method of \citet{Su1990} (see also \cite[Example~II.4.3]{HaLuWa2006}), i.e., instead of \eqref{eq:nth-order-tripleJump},
\begin{equation}
  \label{eq:nth-order-Suzuki}
  \hat{\Phi}^{(n)}_{\dt} \defeq \hat{\Phi}^{(n-2)}_{\gamma_{5}\dt} \circ \hat{\Phi}^{(n-2)}_{\gamma_{4}\dt} \circ  \hat{\Phi}^{(n-2)}_{\gamma_{3}\dt} \circ \hat{\Phi}^{(n-2)}_{\gamma_{2}\dt} \circ \hat{\Phi}^{(n-2)}_{\gamma_{1}\dt}
\end{equation}
with
\begin{equation*}
  \gamma_{1} = \gamma_{2} = \gamma_{4} = \gamma_{5} \defeq \frac{1}{4 - 4^{1/(n-1)}},
  \qquad
  \gamma_{3} \defeq -\frac{4^{1/(n-1)}}{4 - 4^{1/(n-1)}}.
\end{equation*}
We also used Yoshida's method~\cite{Yo1990} (see also \cite[Section~V.3.2]{HaLuWa2006}), which is a $6^{\rm th}$-order symmetric composition technique with 7 stages.

For the symmetric projection step, recall that we need to solve the nonlinear equations
\begin{equation*}
  F_{\dt}(\zeta_{n+1}, \mu) \defeq
  \begin{bmatrix}
    \zeta_{n+1} - \hat{\Phi}_{\dt}(\zeta_{n} + A^{T}\mu) - A^{T}\mu \\
    A \zeta_{n+1}
  \end{bmatrix}
  = 0
\end{equation*}
for $(\zeta_{n+1}, \mu)$.
However, one may eliminate $\zeta_{n+1}$ to have the following nonlinear equation for $\mu$:
\begin{equation}
  \label{eq:f-projection}
  f_{\dt}(\mu) \defeq A\parentheses{ \hat{\Phi}_{\dt}(\zeta_{n} + A^{T}\mu) + A^{T}\mu } = 0.
\end{equation}
Its Jacobian is
\begin{equation*}
  Df_{\dt}(\mu) = A\, D\hat{\Phi}_{\dt}(\zeta_{n} + A^{T}\mu)\, A^{T} + A A^{T},
\end{equation*}
and becomes very simple for $\dt = 0$: Noting that $A A^{T} = 2 I_{2d}$, we have
\begin{equation*}
  Df_{0}(\mu) = 2A A^{T} = 4 I_{2d}.
\end{equation*}
As suggested by \citet[Section~V.4.1]{HaLuWa2006}, we exploit this simple structure of the Jacobian to construct the simplified Newton approximation
\begin{equation}
  \label{eq:simplified_Newton}
  \mu^{(k+1)} = \mu^{(k)}- (Df_{0})^{-1} f_{\dt}\bigl( \mu^{(k)} \bigr)
  = \mu^{(k)}- \frac{1}{4} f_{\dt}\bigl( \mu^{(k)} \bigr),
\end{equation}
where we start with $\mu^{(0)} = 0$.
As we shall see in \Cref{ssec:efficiency}, these simplified Newton iterations tend to converge quickly with a reasonable tolerance $\epsilon$ imposed so it stops after $N$ iterations
\begin{equation}
\label{eq:proj-error}
\norm{ \mu^{(N+1)} - \mu^{(N)} } < \epsilon,
\end{equation}
and we set $\mu = \mu^{(N)}$.

\subsection{Broyden's Method}
\label{ssec:Broyden}
While the above simplified Newton method~\eqref{eq:simplified_Newton} is computationally efficient and works well in most of the examples we shall show below, it is not clear if one can prove that such a method indeed converges to an actual solution of the nonlinear equations.
In fact, as we shall see in \Cref{appx:10vortices} (\Cref{tab:10_vortices_2nd_0.01_2}), the method seems to occasionally fail to converge after a reasonable number of iterations ($N\leq100$ in our simulations) depending on the problem and the values of $\dt$ and $\epsilon$.

Therefore, one may instead use the following quasi-Newton method called Broyden's method~\cite{broyden1965class}:
\begin{subequations}
  \label{eq:Broyden}
  \begin{equation}
    \label{eq:Broyden-update}
    \mu^{(k+1)} = \mu^{(k)} - J_{k}^{-1} f_{\dt}\bigl( \mu^{(k)} \bigr),
  \end{equation}
  where $\{ J_{k} \}_{k\ge0}$ are approximations to $\{ Df_{\dt}(\mu^{(k)}) \}_{k\ge0}$ defined by the initial guess $J_{0} \defeq Df_{0}(\mu^{(0)}) = 4I_{2d}$ and the updates
  \begin{equation*}
    J_{k} = J_{k-1} + \frac{\Delta f^{(k)} - J_{k-1} \Delta \mu^{(k)}}{\norm{\Delta \mu^{(k)}}^2} \bigl(\Delta \mu^{(k)}\bigr)^T
    \quad
    \text{for}
    \quad
    k \ge 1
  \end{equation*}
  with $\Delta f^{(k)} \defeq f(\mu^{(k)}) - f(\mu^{(k-1)})$ and $\Delta\mu^{(k)} \defeq \mu^{(k)} - \mu^{(k-1)}$.
  This guarantees a $q$-quadratic convergence to an actual solution $\mu_{*}$ in our setting, assuming that $\mu^{(0)}$ and $J_{0} = Df_{0}(\mu^{(0)})$ are close enough to $\mu_{*}$ and $Df_{\dt}(\mu_{*})$, respectively; see, e.g., \citet{gay1979some} and \citet{mannel2021order} for details.
  This condition is satisfied if $\dt$ is taken small enough.
  Indeed, notice that setting $\dt = 0$ yields $\mu_{*} = 0$ and also that $\mu_{*}$ depends smoothly on $\dt$ (assuming that the Hamiltonian $H$ is smooth).
  Therefore, if $\dt$ is taken small enough, then $\mu_{*} \approx \mu^{(0)}$ and so $Df_{\dt}(\mu_{*}) \approx Df_{0}(\mu^{(0)})$ as well.
  
  As \citet{broyden1965class} suggested, it is computationally efficient to calculate the inverse $J_{k+1}^{-1}$ directly from $J_{k}^{-1}$ using the Sherman--Morrison formula as follows:
  \begin{equation}
    \label{eq:Broyden-approx}
    J_{k}^{-1} = J_{k-1}^{-1} + \frac{\Delta \mu^{(k)} - J_{k-1}^{-1} \Delta f^{(k)}}{\bigl(\Delta \mu^{(k)}\bigr)^T J_{k-1}^{-1} \Delta f^{(k)}} \bigl(\Delta \mu^{(k)}\bigr)^T J_{k-1}^{-1}.
  \end{equation}
\end{subequations}
The update~\eqref{eq:Broyden-update} along with \eqref{eq:Broyden-approx} is the well-known \textit{good Broyden's method}.

As we shall show in \Cref{appx}, Broyden's method offers an improvement in convergence and works well even for those cases where the simplified method~\eqref{eq:simplified_Newton} seems to fail to converge in a reasonable number of iterations.

\subsection{Order of Method}
As proved in \Cref{prop:symmetry_order}, our method $\Phi$ has the same order of accuracy as the extended integrator $\hat{\Phi}$ used.
We would like to numerically demonstrate this result using the following simple example:
Consider the Hamiltonian system~\eqref{eq:Ham} on $T^{*}\R$ with the Hamiltonian
\begin{equation}
  \label{eq:exact-H}
  H(q,p)=\frac{1}{2} \bigl(q^2+1\bigr) \bigl(p^2+1\bigr)
\end{equation}
with initial condition $(q(0), p(0)) = (-3, 0)$; this is exact solvable as shown in \citet{Ta2016b}.

We implemented our proposed semiexplicit method of orders 2, 4, and 6 using the Triple Jump~\eqref{eq:nth-order-tripleJump} as well as $4^{\rm th}$- and $6^{\rm th}$-order methods using Suzuki’s composition~\eqref{eq:nth-order-Suzuki} and Yoshida's $6^{\rm th}$-order composition mentioned above.

For comparison, we also used the $2^{\rm nd}$- and $4^{\rm th}$-order Gauss--Legendre methods~(see, e.g., \citet[Section~II.1.3]{HaLuWa2006} and \citet[Table~6.4 on p.~154]{LeRe2004}); its $2^{\rm nd}$-order method is commonly known as the Implicit Midpoint method, whereas we refer to the $4^{\rm th}$-order method as the IRK4 for short here.
We note that these methods are known to be symplectic; see, e.g., \cite[Theorem VI.4.2]{HaLuWa2006} and \cite[Section~6.3.1]{LeRe2004}.

\Cref{fig:exact-orders} shows how the maximum relative error in the Hamiltonian depends on the time step for our method along with Tao's method~\cite{Ta2016b} of the same orders, as well as the Gauss--Legendre methods.
We observe that our semiexplicit method exhibits the desired orders of accuracy.
Note that the errors of the $6^{\rm th}$-order methods for smaller time steps seem to be affected by the errors in the projection step, where the tolerance is $\epsilon=10^{-15}$ here.
Notice also that our method is consistently more accurate than Tao's of the same order.

\begin{figure}[htbp]
  \centering
  \includegraphics[width=0.7\linewidth]{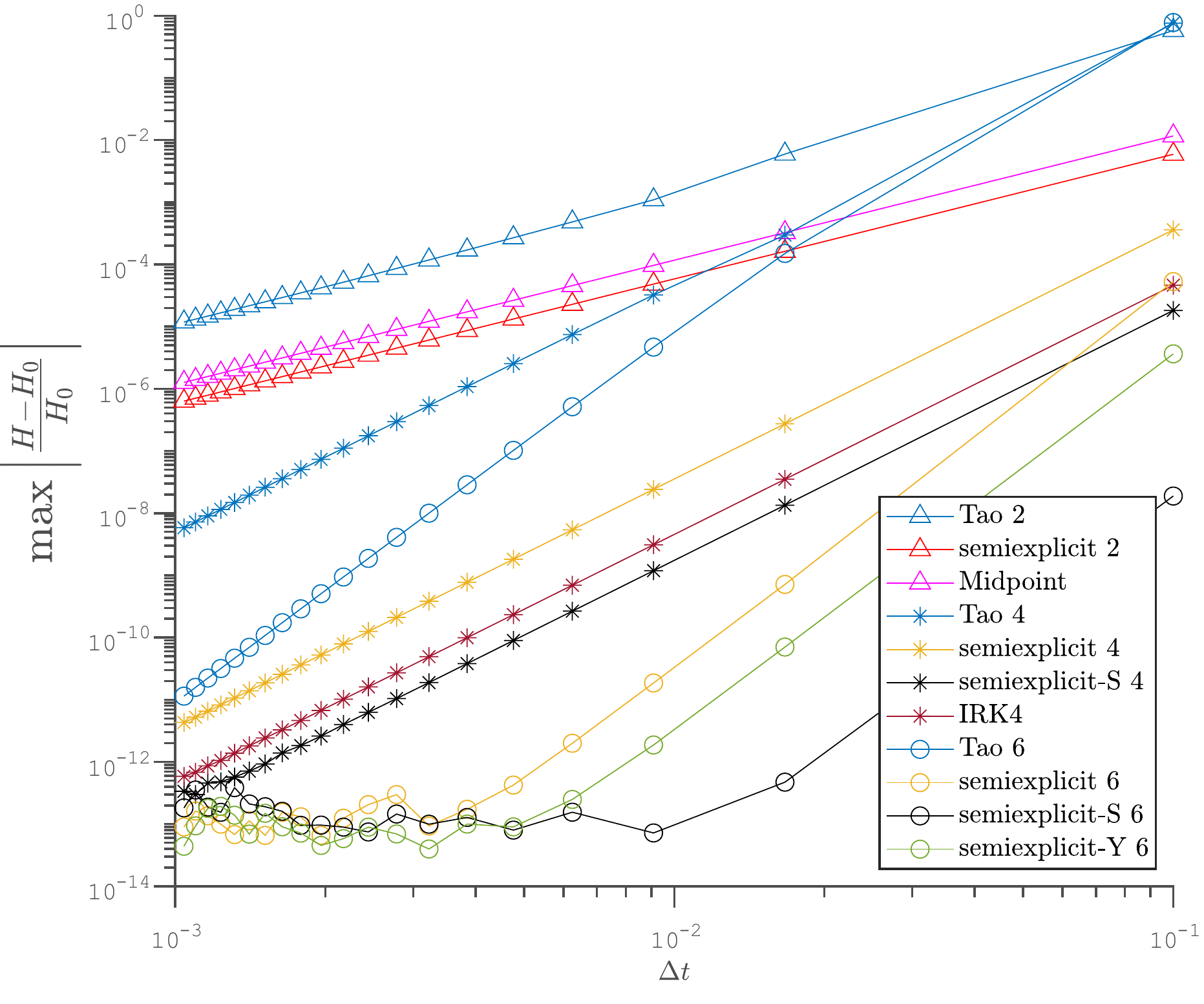}
  \caption{
    Maximum relative error in Hamiltonian~\eqref{eq:exact-H} (compared to initial value $H_{0}$) for different time steps with various methods:
    Tao~\cite{Ta2016b} (see \eqref{eq:Tao-Strang}) with $\omega=20$ using Triple Jump~\eqref{eq:nth-order-tripleJump}; proposed semiexplicit method with Triple Jump, Suzuki's and Yoshida's compositions; $2^{\rm nd}$-order Implicit Midpoint and the $4^{\rm th}$-order Gauss--Legendre method (IRK4).
    Simulations with terminal time $T=100$ and with a tolerance of $\epsilon=10^{-15}$ for the projection step were taken to calculate the maximum absolute relative error of Hamiltonian $H$ for time steps $\dt \in [0.001, 0.1]$.
  }
  \label{fig:exact-orders}
\end{figure}

\subsection{Preservation of Invariants}
\label{ssec:invariants}
Next we would like to address the long-time preservation of invariants of the system.
Before getting into the comparison of our method with Tao's \cite{Ta2016b}, let us first mention the main factor that affects the preservation of invariants.
As mentioned in \Cref{ssec:issues}, the main issue of the original extended phase space integrator of \citet{Pi2015} is the growth of the defect $(x - q, y - p)$.
Tao's modification seems to suppress this growth.
However, as we shall see below, the defect seems to affect the error of the the invariants.

Let us numerically demonstrate the behavior of the defect with an example.
Consider the following finite-dimensional approximation to the nonlinear Schr\"{o}dinger equation (NLS) equation (see \citet{CoKeStTaTa2010}) used in \citet{Ta2016b}.
It is a Hamiltonian system~\eqref{eq:Ham} with $d = N$ and the non-separable Hamiltonian
\begin{equation}
  \label{eq:Hamil-NLS}
  H(q,p) = \frac{1}{4} \sum_{i=1}^{N} \bigl(q_i^2 + p_i^2\bigr)^2 - \sum_{i=2}^{N} \bigl( p_{i-1}^2 p_{i}^2 + q_{i-1}^2 q_{i}^2 - q_{i-1}^2 p_{i}^2 - p_{i-1}^2 q_{i}^2 + 4p_{i-1}p_{i}q_{i-1}q_{i} \bigr).
\end{equation}
We note that this system has another invariant
\begin{equation}
  \label{eq:total_mass}
  I(q,p) \defeq \sum_{i=1}^{N} \bigl(q_i^2+p_i^2\bigr)
\end{equation}
called the total mass.
Following \citet{Ta2016b}, we set $N = 5$, $\omega = 100$, and $q(0) = (3,0.01,0.01,0.01,0.01)$ and $p(0) = (1,0,0,0,0)$.
\Cref{fig:Schrodinger_diff} shows how the Euclidean norm $\norm{ (q,p) - (x,y) }$ of the defect changes with time.
Note that higher-order methods of Tao~\eqref{eq:Tao-Strang} are constructed by the Tripe Jump composition~\eqref{eq:nth-order-tripleJump} as well.
One can see that, compared to Pihajoki's integrator~\eqref{eq:Strang}, Tao's middle step is able to successfully suppress the defect into small oscillations.

\begin{figure}[htbp]
  \centering
  \includegraphics[width=0.55\linewidth]{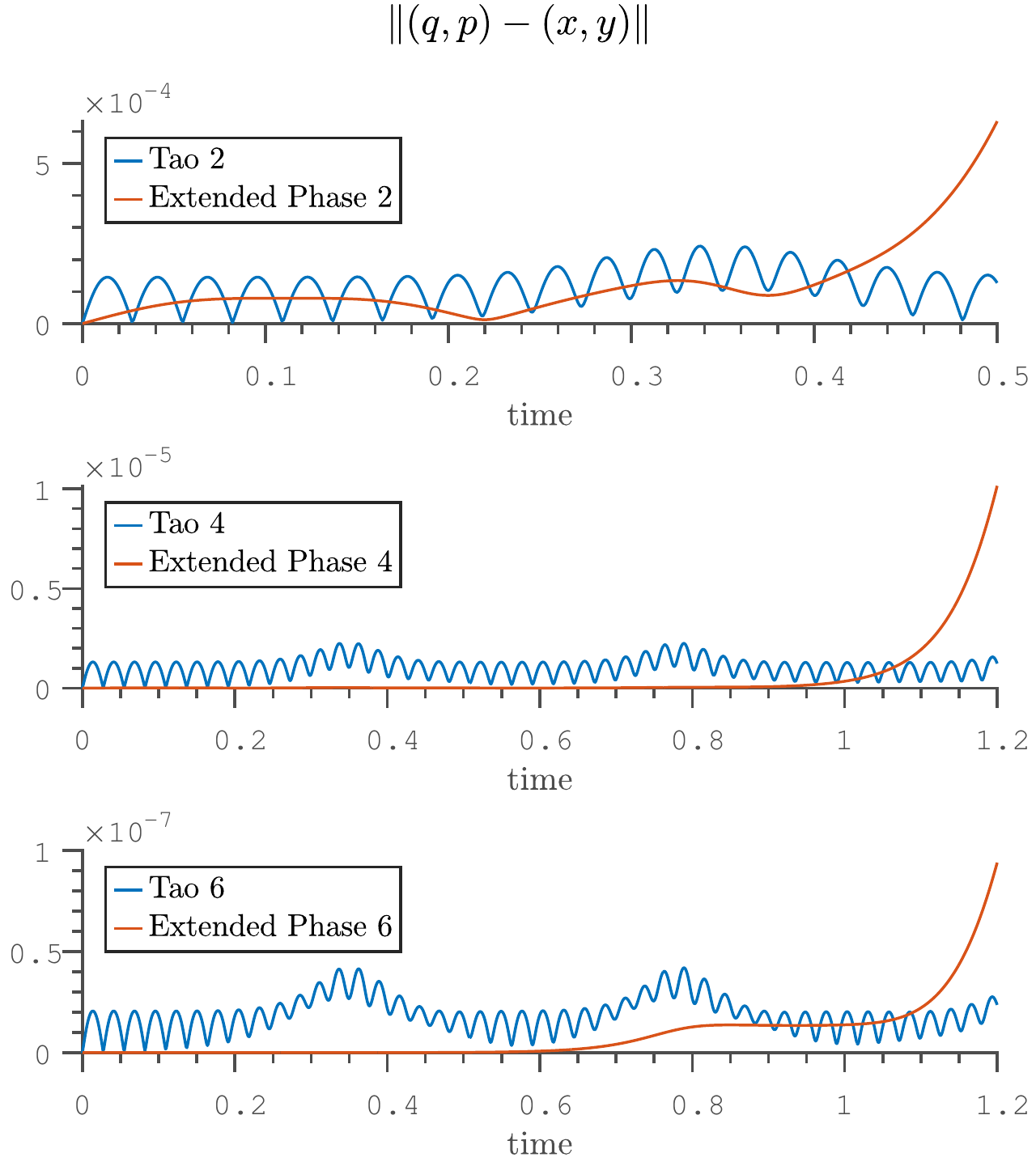}
  \caption{
    Defect $\norm{ (q,p) - (x,y) }$ in phase space copies with Pihajoki's original extended phase space integrator~\eqref{eq:Strang} and Tao's modified method~\eqref{eq:Tao-Strang} when solving NLS with $N=5$.
    Note that the defect for our semiexplicit method (not plotted) would be the same order as the tolerance $\epsilon$ ($10^{-10}$ or $10^{-13}$) used in the symmetric projection step, and are negligibly small compared to theirs.
  }
  \label{fig:Schrodinger_diff}
\end{figure}

However, this defect $(x-q,y-p)$, albeit small, seems to affect the accuracy of preserving invariants.
In order to demonstrate it, we solved the above NLS system using our semiexplicit method to compare the results with those of Tao's method.
We chose the time step as $\dt = 10^{-3}$ because this system exhibits weak turbulence (see~\citet{DyNePuZa1992}), and also used a relaxed tolerance of $\epsilon = 10^{-10}$ for the simplified Newton iterations~\eqref{eq:simplified_Newton} for faster computations.

\Cref{fig:Schrodinger_Inv_Tao} shows the time evolution of two invariants of the system---Hamiltonian and total mass $I$---for the semiexplicit method and Tao's method both with the Triple Jump composition technique.
While both methods preserve the invariants well without drifts, one clearly sees that our method has a much better accuracy.
In fact, the results seem to indicate that Tao's method picks up errors in the invariants roughly proportional to the defect $(x - q, y - p)$; compare with \Cref{fig:Schrodinger_diff}.

\begin{figure}[htbp]
  \centering
  \includegraphics[width=.8\linewidth]{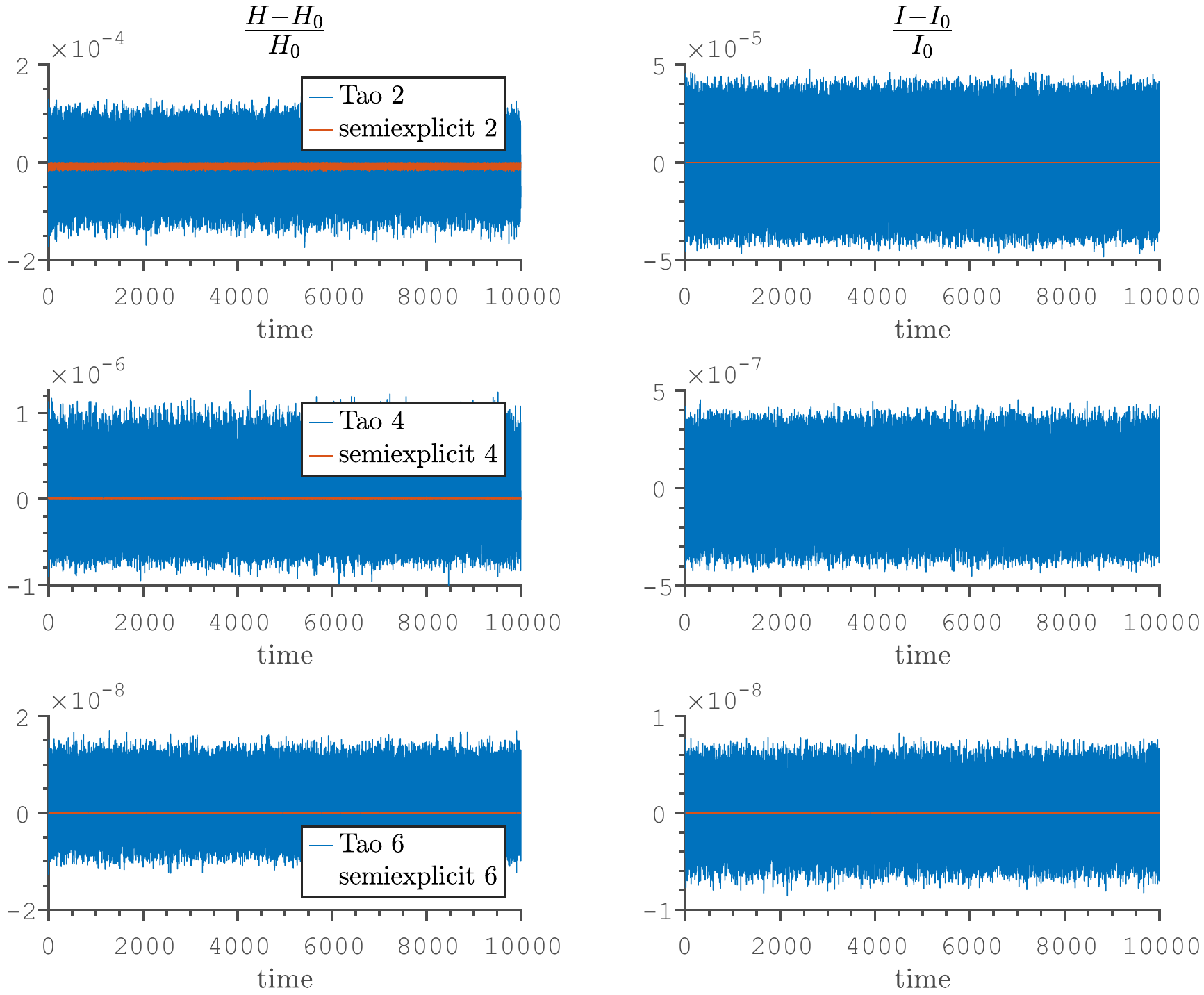}
  \caption{
    Comparison of preservation of two invariants ($H$ and $I$; see \eqref{eq:Hamil-NLS} and \eqref{eq:total_mass}) between Tao's method ($\omega=100$) and proposed semiexplicit method ($\epsilon = 10^{-10}$) for NLS system with $N=5$ and time step $\dt = 10^{-3}$.
    higher-order methods for both methods are constructed using Triple Jump technique \eqref{eq:nth-order-tripleJump}.
    The subscript $(\,\cdot\,)_{0}$ signifies the initial value.
  }
  \label{fig:Schrodinger_Inv_Tao}
\end{figure}

For a further demonstration, \Cref{fig:Schrodinger_Inv} shows a comparison of our method with the $2^{\rm nd}$- and $4^{\rm th}$-order Gauss--Legendre methods, for the same NLS problem as above but using a smaller tolerance of $\epsilon = 10^{-13}$ for both the semiexplicit and implicit methods.
We also used Suzuki's and Yoshida's methods for higher-order methods in addition to the Triple Jump method.

\begin{figure}[htbp]
  \centering
  \includegraphics[width=.8\linewidth]{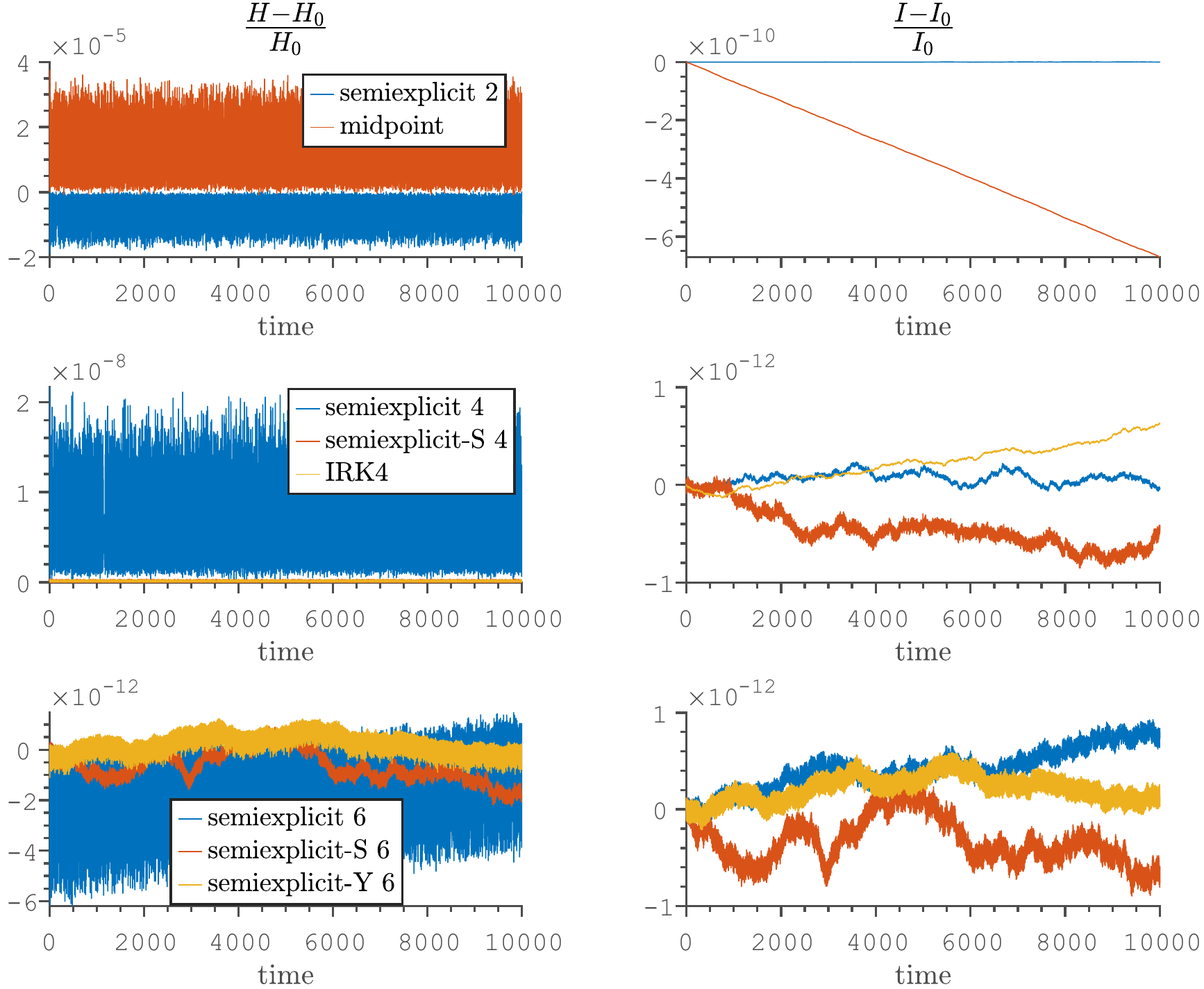}
  \caption{
    Comparison of same invariants from \Cref{fig:Schrodinger_Inv_Tao} with more strict tolerance of $\epsilon = 10^{-13}$, time step $\dt = 10^{-3}$ and different methods:
    Higher-order semiexplicit methods with Triple Jump and \citeauthor{Su1990}'s and \citeauthor{Yo1990}'s compositions (with labels ``S'' and ``Y'' respectively), as well as the $2^{\rm nd}$- and $4^{\rm th}$-order Gauss--Legendre methods (referred to as midpoint and IRK).
  }
  \label{fig:Schrodinger_Inv}
\end{figure}

Notice that, for the Implicit Midpoint (or the $2^{\rm nd}$-order Gauss--Legendre) method, one observes a drift in the total mass.
Since the Gauss--Legendre methods preserve quadratic invariants exactly (see \citet{Co1987} and \citet[Theorems~IV.2.1 and IV.2.2]{HaLuWa2006}), the culprit for the drift would be the accumulated error from the nonlinear solver (Newton's method here); the tolerance here is $\epsilon = 10^{-13}$.
Notice that our method exhibits no such drift, despite using the same tolerance with a simplified Newton method.

We also observe that Suzuki's composition has a better accuracy than the Triple Jump, especially in preserving the Hamiltonian for the $4^{\rm th}$-order methods, and also that Yoshida's $6^{\rm th}$-order method performs as well as Suzuki's, despite having much fewer stages and hence much faster as we shall see below.

As the third demonstration using another example, consider the motion of $N$ point vortices in $\mathbb{R}^2$ with circulations $\{ \Gamma_{i} \in \R\backslash\{0\} \}_{i=1}^{N}$~(see, e.g., \citet[Section~2.1]{Ne2001} and \citet[Section~2.1]{ChMa1993}). Interactions of them make their centers $\{ z_{i} \defeq (x_{i}, y_{i}) \in \R^{2} \}_{i=1}^{N}$ move according to the equations
\begin{equation*}
  \dot{x}_j = - \frac{1}{2\pi} \sum_{\substack{1\le i \le N\\i\neq j}} \frac{\Gamma_i (y_j - y_i)}{\|z_i - z_j\|^2},
  \qquad
  \dot{y}_j = \frac{1}{2\pi} \sum_{\substack{1\le i \le N\\i\neq j}} \frac{\Gamma_i (x_j - x_i)}{\|z_i - z_j\|^2}.
\end{equation*}
Taking the interaction energy of the vortex system given by
\begin{equation}
  \label{eq:Hamil-Vortices}
  H(x, y) \defeq -\frac{1}{4\pi} \sum_{i \neq j} \Gamma_i \Gamma_j \log\|z_i - z_j\|
\end{equation}
as the Hamiltonian, this system can be represented as a canonical Hamiltonian system~\eqref{eq:Ham} with $d = N$ by a simple coordinate transformation:
\begin{equation*}
  q_i \defeq \sqrt{|\Gamma_i|}\, x_i,
  \qquad
  p_i \defeq \sqrt{|\Gamma_i|}\, \sgn(\Gamma_i)\, y_i,
\end{equation*}
where $\sgn(x)$ is $1$ if $x > 0$ and $-1$ otherwise.
Note that the Hamiltonian is again non-separable.
This system has three invariants in addition to the Hamiltonian:
\begin{equation}
  \label{eq:impulses}
  Q \defeq \sum_{i=1}^{N} \Gamma_i x_i,
  \qquad
  P \defeq \sum_{i=1}^{N} \Gamma_i y_i.
  \qquad
  I \defeq \sum_{i=1}^{N} \Gamma_i \|z_i\|^2,
\end{equation}
where $(Q,P)$ is called the linear impulse, and $I$ is called the angular impulse.

Our test case has 10 vortices ($N=10$) with the following initial condition and circulations:
\begin{equation}
  \label{eq:IC_10-votex}
  \begin{split}
    x(0)      &=(3,-10,6,9,0,7,-8,5,9,7),\\
    y(0)      &=(-5,-6,0,-2,0,10,2,9,0,-1),\\
    \Gamma    &=\frac{1}{10}(-5,3,6,7,-2,-8,-9,-3,7,-6).
  \end{split}
\end{equation}

\begin{figure}[htbp]
  \centering
  \includegraphics[width=.9\linewidth]{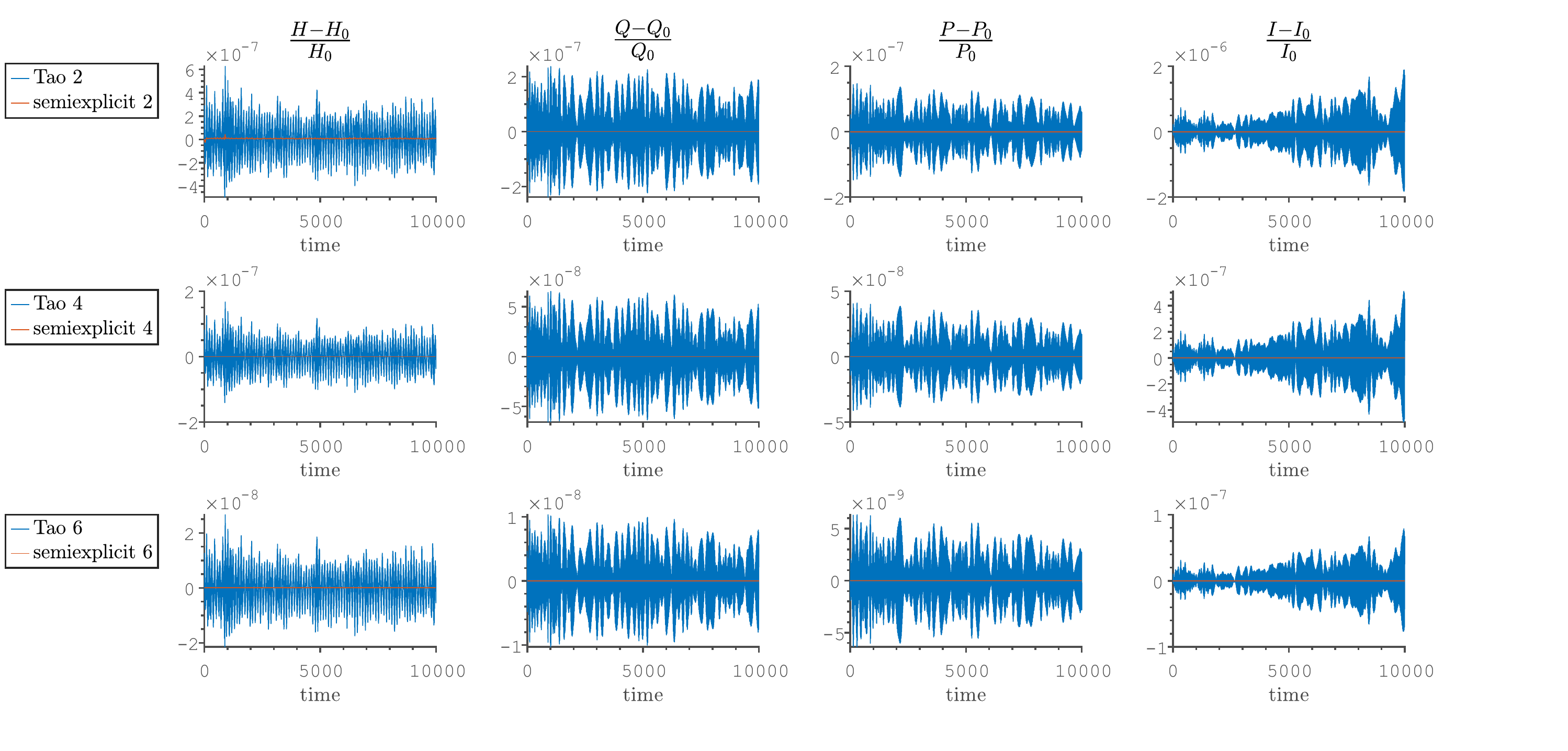}
  \caption{
    Comparison of preservation of four invariants ($H$, $Q$, $P$, and $I$; see \eqref{eq:Hamil-Vortices} and \eqref{eq:impulses}) between Tao's method ($\omega=7$) and proposed semiexplicit method ($\epsilon = 10^{-10}$) for the 10-vortex system with time step $\dt = 0.1$.
    higher-order methods for both methods are constructed using Triple Jump technique \eqref{eq:nth-order-tripleJump}.
    The subscript $(\,\cdot\,)_{0}$ signifies the initial value.
  }
  \label{fig:vortex_Inv_Tao}
\end{figure}

\Cref{fig:vortex_Inv_Tao} shows the results from solving this system using Tao's and our semiexplicit methods; the time step is $\dt = 0.1$ and the tolerance is $\epsilon = 10^{-10}$.
Again we observe that our semiexplicit method outperforms Tao's method in preserving the four invariants.

\begin{figure}[htbp]
  \centering
  \includegraphics[width=.9\linewidth]{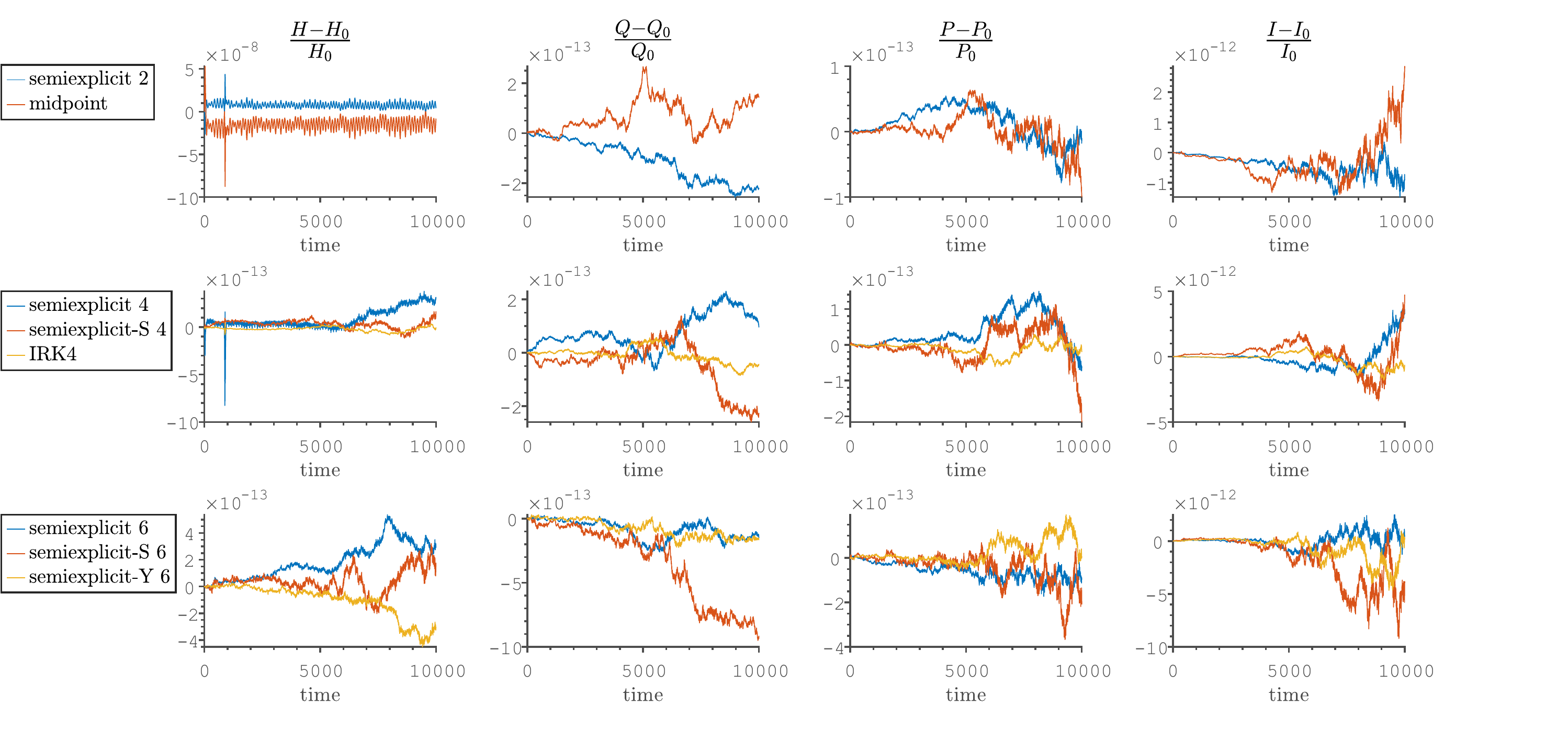}
  \caption{
    Comparison of same invariants from \Cref{fig:vortex_Inv_Tao} with more strict tolerance of $\epsilon = 10^{-13}$, time step $\dt = 0.1$ and various other methods:
    Higher-order semiexplicit methods with Triple Jump and \citeauthor{Su1990}'s and \citeauthor{Yo1990}'s compositions (with labels ``S'' and ``Y'' respectively), as well as the $2^{\rm nd}$- and $4^{\rm th}$-order Gauss--Legendre methods (referred to as midpoint and IRK).
  }
  \label{fig:vortex_Inv}
\end{figure}

\Cref{fig:vortex_Inv} shows further comparisons with other methods including the Gauss--Legendre methods with a more strict tolerance of $\epsilon = 10^{-13}$ for both the semiexplicit and implicit methods.
We see that the proposed semiexplicit method is comparable to the Gauss--Legendre methods in preserving all the four invariants.
Given that the Gauss--Legendre methods are known to preserve quadratic invariants exactly (see \citet{Co1987} and \citet[Theorems~2.1 and 2.2]{HaLuWa2006}), this result demonstrates that our method exhibits an excellent long-time behavior in preserving the invariants.

Also compare the errors of Tao's method in the invariants $Q$, $P$, and $I$ from \Cref{fig:vortex_Inv_Tao} and those of ours from \Cref{fig:vortex_Inv}.
The errors are significantly smaller with our method, indicating that the defect $(x - q, y - p)$ (and possibly also the symplecticity in the original phase space) affects the errors in the invariants.
This observation underscores the importance of our symmetric projection step.

\subsection{Computational Efficiency}
\label{ssec:efficiency}
Now that we have seen that our method exhibits desirable accuracies and long-time behaviors, a natural question to ask is whether it is as fast as Tao's explicit method, or at least faster than those implicit Gauss--Legendre methods.
So we would like to compare the computation times of those methods discussed above when applied to the NLS and the vortex problems using the same initial values and time steps from above.

\Cref{tab:Schrodinger} compares the computation times and average number of iterations of various methods applied to the NLS problem; see the caption for details.

\begin{table}[htbp]
  \caption{
    Comparison of computation times of various methods when solving NLS system with $N=5$ from \Cref{ssec:invariants} with time step $\dt = 10^{-3}$ and terminal time $T=1000$.
    Tao's method uses $\omega=100$; its higher-order versions (Tao~4 and Tao~6) use the Triple Jump technique, and so does our standard semiexplicit method (semiexplicit~4 and semiexplicit~6).
    Those referred to as semiexplicit-S and semiexplicit-Y are our semiexplicit methods using \citeauthor{Su1990}'s and \citeauthor{Yo1990}'s compositions, respectively.
    \texttt{Time} is the computation time in seconds averaged over $10$ simulations, whereas \texttt{NW\_itr} is the average number of Newton-type iterations used per step for both the simplified Newton~\eqref{eq:simplified_Newton} and Broyden's method~\eqref{eq:Broyden}.
    For the Midpoint and IRK4, the full Newton iteration is used.
    Computations were performed using MATLAB on a computer with Intel Core i7-8565U processor running at 1.80GHz.
  }
 \begin{tabular}{l|rc|rc||rc|rc}
   & \multicolumn{4}{c||}{$\epsilon=10^{-10}$} & \multicolumn{4}{c}{$\epsilon=10^{-13}$} \\ \cline{2-9}
   & \multicolumn{2}{c}{Simplified~\eqref{eq:simplified_Newton}} & \multicolumn{2}{|c||}{Broyden~\eqref{eq:Broyden}} & \multicolumn{2}{c}{Simplified~\eqref{eq:simplified_Newton}} & \multicolumn{2}{|c}{Broyden~\eqref{eq:Broyden}} \\ \hline
   \textbf{method} & \texttt{Time}  & \texttt{NW\_itr} & \texttt{Time} & \texttt{NW\_itr} & \texttt{Time} & \texttt{NW\_itr} & \texttt{Time} & \texttt{NW\_itr} \\ \hline
    \textbf{Tao 2}		& 7.03	 &	& 7.03	&     	 & 7.03	 & 	  & 7.03   &      \\
    \textbf{semiexplicit 2}	& 33.24	 & 3.37	& 42.12	& 3.40   & 44.71 & 5	  & 59.48  & 4.90 \\
    \textbf{Midpoint}		& 115.94 & 5.99	& 115.94 & 5.99  & 145.74 & 7.89  & 145.74 & 7.89 \\ \hline
    \textbf{Tao 4}		& 17.10	 &	& 17.10	&     	 & 17.10 &	  & 17.10  &   	  \\
    \textbf{semiexplicit 4}	& 32.50	 & 1.94	& 36.97	& 1.94   & 46.37 & 3	  & 56.23  & 3    \\
    \textbf{semiexplicit-S 4}	& 26.95	 & 1.09	& 26.96	& 1.09   & 50.13 & 2.28	  & 55.34  & 2.29 \\
    \textbf{IRK4}		& 235.30 & 5.21	& 235.30 & 5.21  & 299.09 &  6.98 & 299.09 & 6.98 \\ \hline
    \textbf{Tao 6}		& 44.75	 &	& 44.75	&     	 & 44.75 & 	  & 44.75  &      \\
    \textbf{semiexplicit 6}	& 37.90	 & 1.00	& 36.98	& 1.00   & 67.37 & 1.98	  & 72.34  & 1.98 \\
    \textbf{semiexplicit-S 6}	& 90.19	 & 1.00	& 88.83	& 1.00   & 86.30 & 1.00	  & 84.04  & 1.00 \\
    \textbf{semiexplicit-Y 6}	& 31.65	 & 1.00	& 30.68	& 1.00   & 31.46 & 1.02	  & 29.37  & 1.02 \\ \hline
  \end{tabular}

  \label{tab:Schrodinger}
\end{table}

The columns labeled by \texttt{NW\_itr} shows the average number of iterations required in the implicit part of each method.
We see that our semiexplicit method requires fewer iterations than the Gauss--Legendre methods (Midpoint and IRK4).
One may attribute the slowness of these implicit methods to full Newton's method used by them, but switching to quasi-Newton methods such as Broyden's method would not make these methods faster than ours because it is unlikely to curtail the disparity in the number of iterations.
We note that the number of iterations is a good measure of how these methods compare because the implicit methods and our methods solve nonlinear equations for the same number of unknowns: $(q,p) \in \R^{2d}$ for the implicit methods and $\mu \in \R^{2d}$ for ours.
Now, one may explain the disparity in the number of iterations as follows: Both use 0 as the initial guess, but the implicit methods need to solve for the $O(\dt)$ difference between $(q_{n},p_{n})$ and $(q_{n+1},p_{n+1})$, whereas ours needs to solve for $\mu$, which is essentially the defect after a single step; one may estimate it to be proportional to the local error of the explicit integrator employed, i.e., $O((\dt)^{n+1})$ for an $n$-th order method, because the defect is a quantity that vanishes for the exact solution.
This also explains why our method requires less iterations for higher-order methods.

Notice also that, although our $2^{\rm nd}$-order method is much slower than Tao's, it catches up with Tao's as the order of the method increases.
Particularly our $6^{\rm th}$-method with a relaxed tolerance of $\epsilon = 10^{-10}$ is faster than Tao's of the same order.
Moreover, recall from the last row of \Cref{fig:Schrodinger_Inv_Tao} (plots from the same problem) shows that our $6^{\rm th}$-order method is much more accurate than Tao's.
It is also notable that the semiexplicit method with Yoshida's composition is much faster than Tao's $6^{\rm th}$-order method with Triple Jump thanks to the fewer steps involved in Yoshida's composition.

\Cref{tab:10_vortices} shows the same comparison for the 10-vortex system.
We show results only with the simplified Newton method~\eqref{eq:simplified_Newton} here for the reason stated in the caption.
Note that the 10-vortex system is 20-dimensional whereas the NLS with $N=5$ from above is 10-dimensional.
We observe that our $4^{\rm th}$- and $6^{\rm th}$-order methods are faster than Tao's of the same orders even with $\epsilon = 10^{-13}$.

\begin{table}[hbtp]
  \caption{
    Comparison of computation times of various methods when solving the 10-vortex system from \Cref{ssec:invariants} with time step $\dt = 0.1$ and terminal time $T=1000$.
    We show results only with the simplified Newton method~\eqref{eq:simplified_Newton} because Broyden's method~\eqref{eq:Broyden} gives more or less the same result.
This is because the average number of iterations \texttt{NW\_itr} is usually very close to 1 here: these two methods share exactly the same first iteration step and so there is almost no difference between them when $\texttt{NW\_itr} \approx 1$.
    The other details are the same as \Cref{tab:Schrodinger} except $\omega = 7$ for Tao's method.
  }
  \begin{tabular}{l|rc||rc}
    & \multicolumn{2}{c||}{$\epsilon=10^{-10}$} & \multicolumn{2}{c}{$\epsilon=10^{-13}$} \\ \hline
    \textbf{method}         & \multicolumn{1}{c}{\texttt{Time}}        & \texttt{NW\_itr}       & \multicolumn{1}{c}{\texttt{Time}}        & \texttt{NW\_itr}       \\ \hline
    \textbf{Tao 2}			& 1.82	& 		& 1.82	&  			\\
    \textbf{semiexplicit 2}		& 3.00	& 2		& 4.29	& 3 			\\
    \textbf{Midpoint}		& 18.17	& 4.18	& 22.77	& 5.42		\\ \hline
    \textbf{Tao 4}			& 5.32	& 		& 5.32	&   			\\
    \textbf{semiexplicit 4}		& 4.10	& 1.00	& 4.25	& 1.05   		\\
    \textbf{semiexplicit-S 4}	& 6.59	& 1		& 6.76	& 1.01    		\\
    \textbf{IRK4}			& 45.99	& 4.01	& 57.05	& 4.99		\\ \hline
    \textbf{Tao 6}			& 15.63	& 		& 15.63	& 			\\
    \textbf{semiexplicit 6}		& 11.92	& 1		& 12.03	& 1.00 		\\
    \textbf{semiexplicit-S 6}	& 32.29	& 1		& 32.48	& 1.00		\\
    \textbf{semiexplicit-Y 6}	& 9.23	& 1		& 9.49	& 1.00	    	\\ \hline
  \end{tabular}
  \label{tab:10_vortices}
\end{table}

A possible explanation of why our method can be faster than Tao's for higher-order methods and higher-dimensional problems is the following:
Recall that Tao's $2^{\rm nd}$-order method~\eqref{eq:Tao-Strang} is a composition of 5 flows, whereas our explicit part~\eqref{eq:Strang} is a composition of 3 flows.
Therefore, for the $4^{\rm th}$-order method using the Triple Jump~\eqref{eq:nth-order-tripleJump}, Tao's involves 15 stages whereas ours involves 9 stages, and for the $6^{\rm th}$-order method, Tao's 45 stages whereas ours 27 stages.
Now, notice in \Cref{tab:Schrodinger,tab:10_vortices} that the number of Newton-type iterations, \texttt{NW\_itr}, gets smaller for higher-order methods and higher-dimensional problems, eventually becoming close to 1 in \Cref{tab:10_vortices}.
This means that only one step of Newton-type iteration is performed in the projection step most of the time.
In other words, in the simplified Newton iteration \eqref{eq:simplified_Newton}, $\mu^{(1)}$ is oftentimes taken as $\mu$, but then since $\mu^{(0)} = 0$, we have, using \eqref{eq:f-projection} and \eqref{eq:simplified_Newton},
\begin{equation*}
  \mu^{(1)} = -\frac{1}{4} A\,\hat{\Phi}_{\dt}(\zeta_{n}).
\end{equation*}
\textit{Note that $\hat{\Phi}_{\dt}(\zeta_{n})$ is already computed in the explicit part of the method, and so computing $\mu^{(1)}$ does not involve extra stages of discrete time evolutions (which would be required to compute $\mu^{(k)}$ for $k \ge 2$).}
Therefore, if $\texttt{NW\_itr} \approx 1$, the projection step becomes very simple and hence may require fewer computations compared to the extra stages in Tao's method.

Moreover, as the dimension of the problem increases, each stage becomes computationally more expensive, and so the computational advantage due to the difference in the number of stages would be amplified.

See \Cref{appx} for a further numerical study of the projection step.
The results from above and \Cref{appx} suggest the following:
\begin{itemize}
\item The projection step requires very few iterations when the time step $\dt$ is small enough compared to the scale of the problem.
  \smallskip
\item Increasing $\dt$ may result in a significant increase in the number of iterations, and hence a slowdown of our method; see \Cref{tab:Schrodinger_0.01} in \Cref{appx}.
  \smallskip
\item The simplified Newton method~\eqref{eq:simplified_Newton} occasionally fails to converge within a reasonable number of iterations, but works well in many cases.
  Hence the convergence issue is not detrimental in the long run; see \Cref{tab:10_vortices_2nd_0.01_2} in \Cref{appx}.
  \smallskip
\item Broyden's method~\eqref{eq:Broyden} tends to reduce the number of required iterations slightly, and seems as fast as the simplified Newton method.
  It also resolves the convergence issue of the simplified Newton method mentioned above; see \Cref{tab:10_vortices_2nd_0.01_2} in \Cref{appx}.
\end{itemize}

In conclusion, using a small enough $\dt$ seems to be the key to an efficient implementation of our semiexplicit method.
Also, higher-order methods tend to require less iterations in the projection step.
The reason seems to be that smaller $\dt$ and higher-order methods produce smaller defects in the explicit time evolution, hence less iterations in the projection step to eliminate the defects produced.

\appendix

\section{Numerical Study of Projection Step}
\label{appx}
\subsection{Effect of Time Step and Parameters?}
This appendix gives a further numerical study of the simplified Newton method (\Cref{ssec:implementation}) and Broyden's method (\Cref{ssec:Broyden}) in the projection step.
The purpose of this appendix is to numerically investigate the following questions: (i)~How are these nonlinear solvers affected by a change in the time step as well as a change in the parameters of the problems? (ii)~Does Broyden's method offer a significant improvement of the simplified Newton method?

\subsection{NLS Problem with Larger Time Step}
\label{appx:NLS}
Let us first see how the time step affects the nonlinear solvers by changing $\dt$ from $10^{-3}$ to $10^{-2}$ in the NLS problem.

\begin{table}[htbp]
  \small
  \caption{\small
    Comparison of computation times of various methods when solving the NLS system with $N=5$ from \Cref{ssec:invariants} with time step $\dt = 10^{-2}$ and terminal time $T=10^{3}$ using both the simplified Newton~\eqref{eq:simplified_Newton} and Broyden's method~\eqref{eq:Broyden}.
    The other details are the same as \Cref{tab:Schrodinger}.
  }

 \begin{tabular}{l|rc|rc||rc|rc}
   & \multicolumn{4}{c||}{$\epsilon=10^{-10}$} & \multicolumn{4}{c}{$\epsilon=10^{-13}$} \\ \cline{2-9}
   & \multicolumn{2}{c}{Simplified~\eqref{eq:simplified_Newton}} & \multicolumn{2}{|c||}{Broyden~\eqref{eq:Broyden}} & \multicolumn{2}{c}{Simplified~\eqref{eq:simplified_Newton}} & \multicolumn{2}{|c}{Broyden~\eqref{eq:Broyden}} \\ \hline
   \textbf{method} & \texttt{Time}  & \texttt{NW\_itr} & \texttt{Time} & \texttt{NW\_itr} & \texttt{Time} & \texttt{NW\_itr} & \texttt{Time} & \texttt{NW\_itr} \\ \hline
   \textbf{Tao 2}          	& 0.76	& 	& 0.76	&   	& 0.76	& 	& 0.76	& 	\\
    \textbf{semiexplicit 2}	& 7.72	& 8.54	& 8.71	& 6.90 	& 10.27	& 11.55	& 11.04	& 8.88  \\
    \textbf{Midpoint}       	& 25.77	& 12.87	& 25.77	& 12.87	& 33.60	& 16.66	& 33.60	& 16.66 \\ \hline
    \textbf{Tao 4}          	& 1.85	& 	& 1.85	&     	& 1.85	& 	& 1.85	& 	\\
    \textbf{semiexplicit 4}  	& 10.72	& 6.79	& 11.47	& 5.78 	& 15.11	& 9.88	& 15.13	& 7.85  \\
    \textbf{semiexplicit-S 4}	& 12.53	& 5.50	& 13.07	& 4.98  & 18.95	& 8.56	& 18.46	& 6.95  \\
    \textbf{IRK4}           	& 48.60	& 10.65	& 48.60	& 10.65 & 60.50	& 13.49	& 60.50	& 13.49 \\ \hline
    \textbf{Tao 6}          	& 4.82	& 	& 4.82	&      	& 4.82	& 	& 4.82	&      	\\
    \textbf{semiexplicit 6}  	& 20.91	& 5.93	& 19.44	& 5.00  & 31.02	& 8.92	& 28.24	& 7.06  \\
    \textbf{semiexplicit-S 6}	& 15.69	& 1.78	& 16.49	& 1.78 	& 41.12	& 4.69	& 39.84	& 4.30  \\
    \textbf{semiexplicit-Y 6}	& 11.30	& 3.91	& 13.39	& 3.93  & 19.88	& 6.91	& 19.29	& 5.99  \\ \hline
  \end{tabular}
  \label{tab:Schrodinger_0.01}
\end{table}

Compare \Cref{tab:Schrodinger_0.01} ($\dt = 10^{-2}$) with \Cref{tab:Schrodinger} ($\dt = 10^{-3}$).
We observe that the change in $\dt$ results in significantly more numbers of iterations to achieve convergence in the projection step, and hence a significant slowdown of our integrator.
It also shows that, even with the increased number of iterations, Broyden's method is as fast as the simplified Newton method.
We also notice that Broyden's method tends to require less iterations.

\Cref{tab:Schrodinger_0.01_2} makes a further comparison of the average numbers of iterations for a longer time interval $0 \le t \le 10^{4}$ along with the maximum errors in the projection step and the resulting maximum defects.
Again, Broyden's method requires less iterations in the long run too, as one might expect from the theory mentioned in \Cref{ssec:Broyden}.

\begin{table}[htbp]
  \small
  \caption{\small
    Comparison of convergence and errors of various methods when solving the NLS system with $N=5$ from \Cref{ssec:invariants} with time step $\dt = 0.01$ and terminal time $T=10^{4}$ using both the simplified Newton~\eqref{eq:simplified_Newton} and Broyden's method~\eqref{eq:Broyden}. 
    \texttt{Max\_Error} denotes the maximum of $\norm{ \mu^{(N)} - \mu^{(N-1)} }$  (see \eqref{eq:proj-error}) among all projection steps (hence is approximately $\epsilon$) whereas \texttt{Max\_Defect} denotes the maximum of defects $\norm{ (q,p) - (x,y) }$ in each solution.
    The other details are the same as \Cref{tab:Schrodinger}.
  }
  \begin{tabular}{l|ccc|ccc}
    & \multicolumn{6}{c}{$\epsilon=10^{-13}$} \\ \cline{2-7}
    & \multicolumn{3}{c}{Simplified~\eqref{eq:simplified_Newton}} & \multicolumn{3}{|c}{Broyden~\eqref{eq:Broyden}} \\ \hline
    \textbf{method}		& \texttt{NW\_itr} 	& \texttt{Max\_Error}	& \texttt{Max\_Defect}       & \texttt{NW\_itr}	& \texttt{Max\_Error}	& \texttt{Max\_Defect}       \\ \hline
    \textbf{Tao 2}		& 		& 		& 0.025191	& 		& 		& 0.025191	\\
    \textbf{semiexplicit 2}		& 11.55	& 1e-13	& 4e-13	& 8.88	& 1e-13	& 4.39e-13 	\\
    \textbf{Midpoint}	& 16.66	& 1e-13	& 		& 16.66	& 1e-13	&  		\\ \hline
    \textbf{Tao 4}		& 		& 		& 0.016279	& 		& 		&  0.016279 	\\
    \textbf{semiexplicit 4}		& 9.87	& 1e-13	& 4e-13	& 7.85	& 9.99e-14	& 4.21e-13 	\\
    \textbf{semiexplicit-S 4}	& 8.56	& 1e-13	& 4e-13	& 6.95	& 1e-13	& 4.39e-13  	\\
    \textbf{IRK4}		& 13.50	& 1e-13	& 		& 13.50	& 1e-13	& 		\\ \hline
    \textbf{Tao 6}		& 		& 		& 0.006048	& 		& 		&  0.006048 	\\
    \textbf{semiexplicit 6}		& 8.93	& 9.99e-14	& 4e-13	& 7.06	& 1e-13	& 4.28e-13   	\\
    \textbf{semiexplicit-S 6}	& 4.69	& 1e-13	& 4e-13	& 4.31	& 1e-13	& 4.22e-13 	\\
    \textbf{semiexplicit-Y 6}	& 6.91	& 1e-13	& 4e-13	& 5.99	& 9.99e-14	& 4.17e-13 	\\ \hline
  \end{tabular}
  \label{tab:Schrodinger_0.01_2}
\end{table}

Notice in the \texttt{Max\_Defect} columns that the projection step suppresses the phase space defect $\norm{(q,p) - (x,y)}$ to the order of the tolerance $\epsilon = 10^{-13}$---significantly smaller than Tao's.
These defects seem to affect the errors of the solutions: \Cref{fig:Schrodinger_Inv_Tao_0.01,fig:Schrodinger_Inv_0.01} show the time evolution of two invariants (using Broyden's method) of the system with $\dt = 10^{-2}$ just as we did in \Cref{fig:Schrodinger_Inv_Tao,fig:Schrodinger_Inv}, respectively with $\dt = 10^{-3}$.
What is particularly notable is that, as $\dt$ increases, the error in the total mass $I$ increased significantly for Tao's whereas it stays very small for ours.
Therefore, our method, although our method is now much slower than Tao's, still gives much more accurate results than Tao's.

\begin{figure}[htbp]
  \centering
  \includegraphics[width=.8\linewidth]{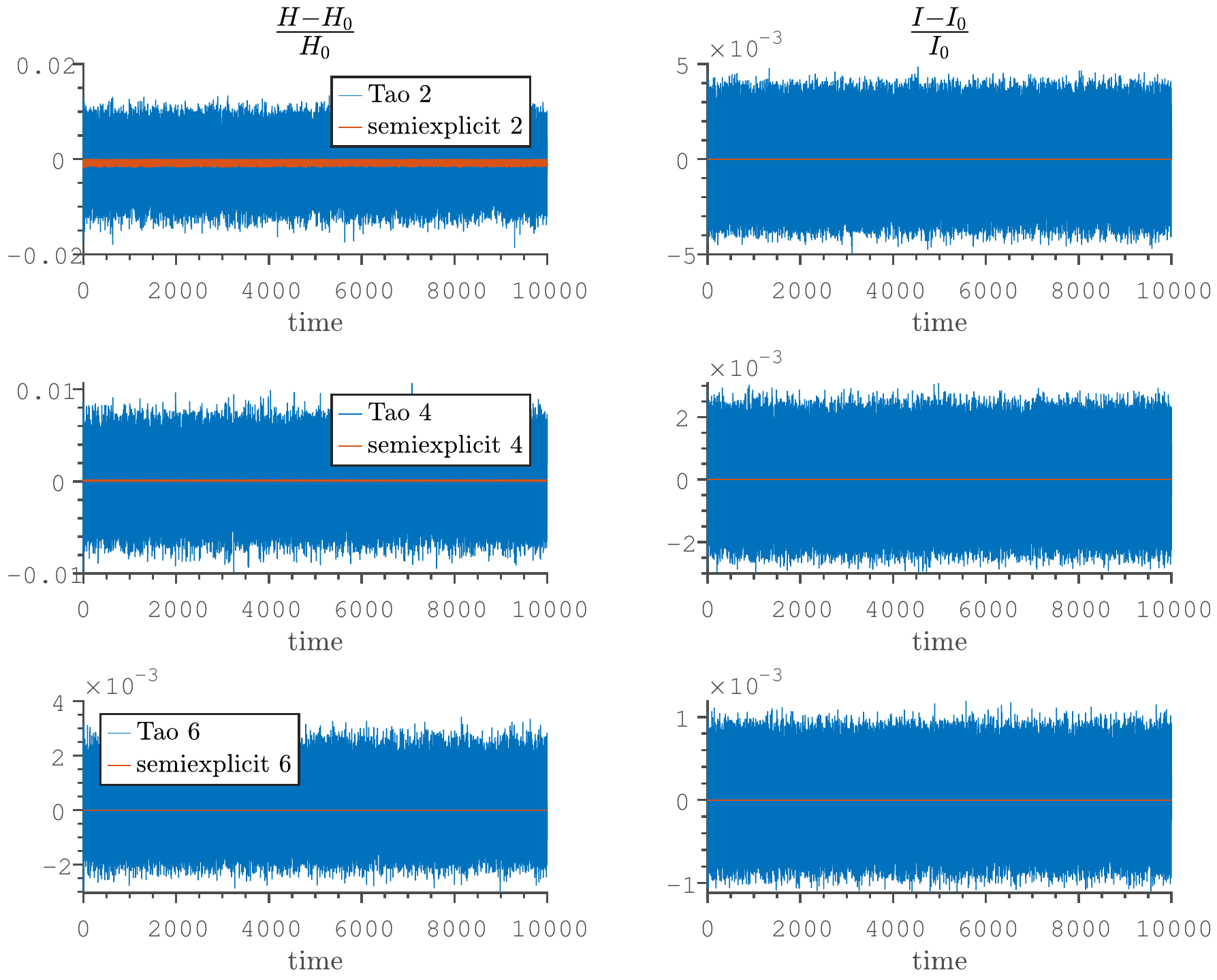}
  \caption{\small
    Same plots as \Cref{fig:Schrodinger_Inv_Tao} except with a larger time step $\dt = 10^{-2}$ instead of $\dt = 10^{-3}$.
  }
  \label{fig:Schrodinger_Inv_Tao_0.01}
\end{figure}

\begin{figure}[htbp]
  \centering
  \includegraphics[width=.8\linewidth]{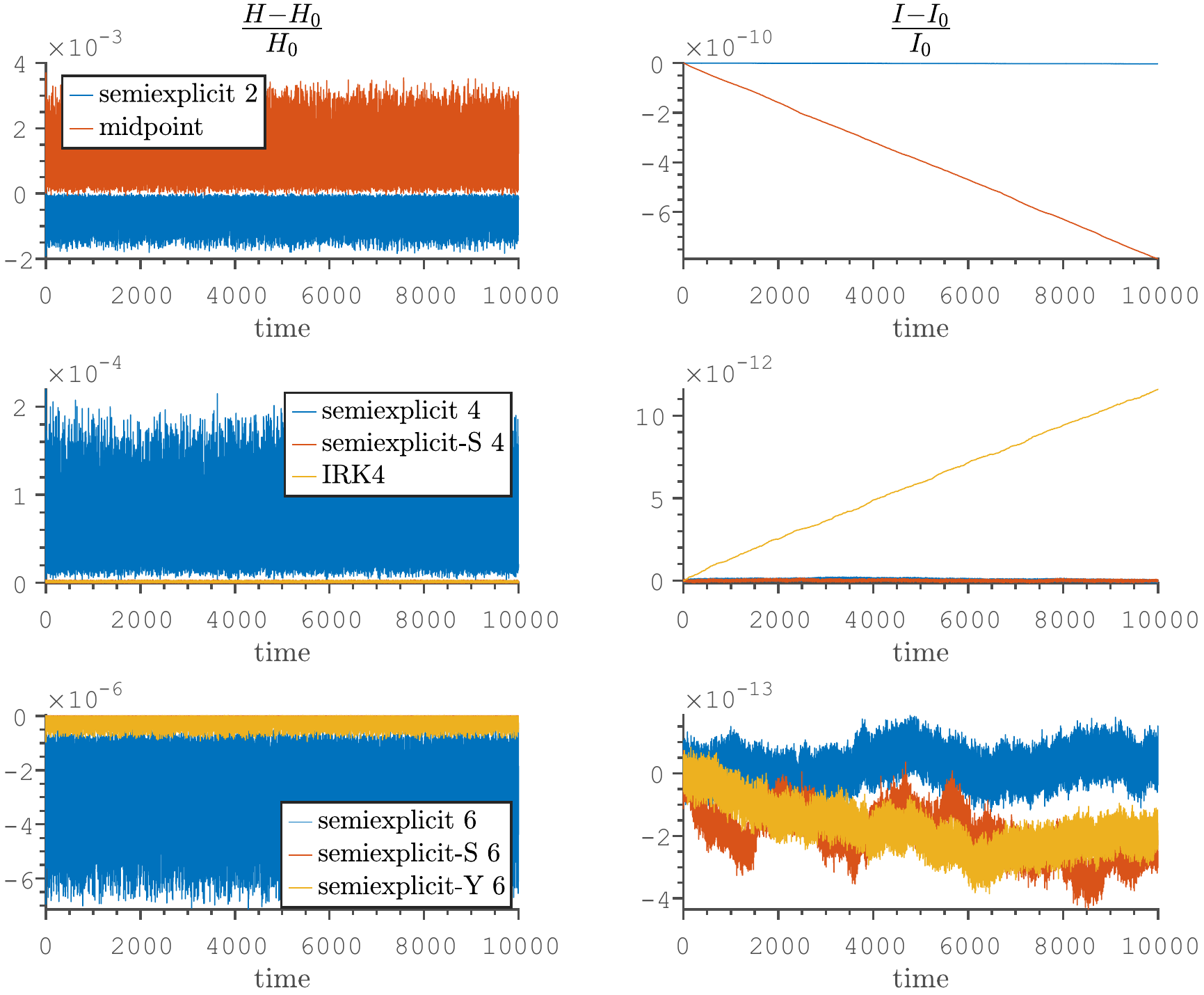}
  \caption{\small
    Same plots as \Cref{fig:Schrodinger_Inv} except with a larger time step $\dt = 10^{-2}$ instead of $\dt = 10^{-3}$.
  }
  \label{fig:Schrodinger_Inv_0.01}
\end{figure}

\subsection{10-Vortex Problem with Disparate Circulations}
\label{appx:10vortices}
Let us modify the 10-vortex problem with the following initial conditions and circulations to create a problem that requires more Newton-type iterations in the projection step.
\begin{equation}
  \label{eq:IC_10-votex_2}
  \begin{split}
    x(0)      &=(  0.5,    3.5,   -1.5,   -0.5,   -4.5,   -3.5,    1.5,   -2,    4,   -4),\\
    y(0)      &=(  5,    0.5,    2,    5,   -2,   -1,   -0.5,    3,    3.5,   -4),\\
    \Gamma    &=(-14.8,  -18.8,   17.6,   -8,   -8.2,   -6.8,   -1.4,    6,  -11,   13.8).
  \end{split}
\end{equation}
Compare this to the previous parameters~\eqref{eq:IC_10-votex} where $-1 < \Gamma_{i} < 1$ for $i = 1, \dots, 10$.
The circulations now have more disparities among the vortices.
This means that vortices move in different time scales, making the problem more challenging in selecting the time step.
Accordingly, we have chosen a smaller time step $\dt = 0.01$ here.

\begin{table}[htbp]
  \small
  \caption{\small
    Comparison of computation times of various methods when solving the 10-vortex system with parameters~\eqref{eq:IC_10-votex_2} in place of \eqref{eq:IC_10-votex} and time step $\dt = 0.01$ and terminal time $T=100$ using both the simplified Newton~\eqref{eq:simplified_Newton} and Broyden's method~\eqref{eq:Broyden}.
    The other details are the same as \Cref{tab:10_vortices}.
  }
  \begin{tabular}{l|rc|rc||rc|rc}
    & \multicolumn{4}{c||}{$\epsilon=10^{-10}$} & \multicolumn{4}{c}{$\epsilon=10^{-13}$} \\ \cline{2-9}
    & \multicolumn{2}{c}{Simplified~\eqref{eq:simplified_Newton}} & \multicolumn{2}{|c||}{Broyden~\eqref{eq:Broyden}} & \multicolumn{2}{c}{Simplified~\eqref{eq:simplified_Newton}} & \multicolumn{2}{|c}{Broyden~\eqref{eq:Broyden}} \\ \hline
    \textbf{method} & \texttt{Time}  & \texttt{NW\_itr} & \texttt{Time} & \texttt{NW\_itr} & \texttt{Time} & \texttt{NW\_itr} & \texttt{Time} & \texttt{NW\_itr} \\ \hline
    \textbf{Tao 2}          	& 2.05	& 	& 2.05	& 	& 2.05	& 	& 2.05	&      \\
    \textbf{semiexplicit 2}	& 5.04	& 3	& 5.01	& 3 	& 8.12	& 5	& 7.68	& 4.55 \\
    \textbf{Midpoint}       	& 24.50	& 4.62	& 24.50	& 4.62	& 31.12	& 5.77	& 31.12	& 5.77 \\ \hline
    \textbf{Tao 4}          	& 5.96	& 	& 5.96	& 	& 5.96	& 	& 5.96	&      \\
    \textbf{semiexplicit 4}  	& 8.44	& 1.85	& 8.89	& 1.85	& 13.96	& 3	& 14.05	& 3    \\
    \textbf{semiexplicit-S 4}	& 8.08	& 1.08	& 8.43	& 1.08	& 17.19	& 2.29	& 17.86	& 2.28 \\
    \textbf{IRK4}           	& 58.71	& 4.30	& 58.71	& 4.30	& 73.88	& 5.25	& 73.88	& 5.25 \\ \hline
    \textbf{Tao 6}          	& 17.67	& 	& 17.67	& 	& 17.67	& 	& 17.67	&      \\
    \textbf{semiexplicit 6}  	& 13.35	& 1.00	& 13.61	& 1.00	& 24.36	& 1.80	& 25.28	& 1.80 \\
    \textbf{semiexplicit-S 6}	& 37.15	& 1.00	& 37.15	& 1.00 	& 40.60	& 1.08	& 40.58	& 1.08 \\
    \textbf{semiexplicit-Y 6}	& 10.51	& 1	& 10.42	& 1     & 11.01	& 1.03	& 11.09	& 1.03 \\ \hline
  \end{tabular}

  \label{tab:10_vortices_2nd_0.01}
\end{table}

\Cref{tab:10_vortices_2nd_0.01} shows the average computation times as well as the average number of iterations for this problem just as in \Cref{tab:10_vortices} for $0 \le t \le 100$.
We observe an increase in the average number of iterations and hence a slight slowdown of our method.
As a result, our method is slightly slower than Tao's with $\epsilon = 10^{-13}$, although with $\epsilon= 10^{-10}$, the our $6^{\rm th}$-order method is still faster than Tao's.

\begin{table}[htbp]
  \small
  \caption{\small
    Comparison of convergence and errors of various methods when solving the challenging 10-vortex system with time step $\dt = 0.01$ and a longer terminal time $T=1000$ using both the simplified Newton~\eqref{eq:simplified_Newton} and Broyden's method~\eqref{eq:Broyden}.
    \texttt{Max\_Error} denotes the maximum of $\norm{ \mu^{(N)} - \mu^{(N-1)} }$  (see \eqref{eq:proj-error}) among all projection steps (hence is approximately $\epsilon$) whereas \texttt{Max\_Defect} denotes the maximum of defects $\norm{ (q,p) - (x,y) }$ in each solution.
    The star $^{*}$ indicates that the Newton iteration was stopped at $N = 100$ before it converged, hence resulting in \texttt{Max\_Error} larger than $\epsilon$.
    The other details are the same as \Cref{tab:10_vortices}.
  }
  \begin{tabular}{l|ccc|ccc}
    & \multicolumn{6}{c}{$\epsilon=10^{-13}$} \\ \cline{2-7}
    & \multicolumn{3}{c}{Simplified~\eqref{eq:simplified_Newton}} & \multicolumn{3}{|c}{Broyden~\eqref{eq:Broyden}} \\ \hline
    \textbf{method}		& \texttt{NW\_itr} 	& \texttt{Max\_Error}	& \texttt{Max\_Defect}       & \texttt{NW\_itr}	& \texttt{Max\_Error}	& \texttt{Max\_Defect}       \\ \hline
    \textbf{Tao 2}			& 			& 		& 0.00376	& 			& 		& 0.00376    	\\
    \textbf{semiexplicit 2}		& 5.27$^{*}$	& 1.67e-13	& 6.66e-13	& 4.46$\:\:$		& 9.99e-14	& 6.57e-13	\\
    \textbf{Midpoint}		& 5.71$\:\:$		& 9.99e-14	& 		& 5.71$\:\:$		& 9.99e-14	& 		\\ \hline
    \textbf{Tao 4}			& 			& 		& 8.38e-06	& 			& 		& 8.38e-06	\\
    \textbf{semiexplicit 4}		& 3.59$^{*}$	& 1.62e-13	& 6.47e-13	& 3.05$\:\:$		& 9.99e-14	& 7.32e-13	\\
    \textbf{semiexplicit-S 4}	& 2.64$^{*}$	& 1.61e-13	& 6.44e-13	& 2.45$\:\:$		& 1e-13	& 7.11e-13 	\\
    \textbf{IRK4}			& 5.58$^{*}$	& 1.18e-13	& 		& 5.58$^{*}$	& 1.18e-13	& 		\\ \hline
    \textbf{Tao 6}			& 			& 		& 2.09e-07	& 			& 		& 2.09e-07	\\
    \textbf{semiexplicit 6}		& 1.97$^{*}$	& 1.42e-13	& 5.69e-13	& 1.90$\:\:$		& 9.99e-14	& 7.15e-13   	\\
    \textbf{semiexplicit-S 6}	& 1.40$^{*}$	& 1.16e-13	& 4.62e-13	& 1.41$\:\:$		& 9.99e-14	& 7.84e-13	\\
    \textbf{semiexplicit-Y 6}	& 1.14$\:\:$		& 1e-13	& 4e-13	& 1.14$\:\:$		& 9.99e-14	& 5.41e-13	\\ \hline
  \end{tabular}

  \label{tab:10_vortices_2nd_0.01_2}
\end{table}

\Cref{tab:10_vortices_2nd_0.01_2} shows the average numbers of iterations along with the maximum errors in the projection step and the resulting maximum defects computed for the longer interval $0 \le t \le 1000$.
Overall, there is a slight increase in the average number of iterations for every method.
Particularly, notice those numbers marked with star $^{*}$ in the \texttt{NW\_itr} first column.
These are the instances where the simplified Newton method did not converge to the desired value $\mu$ with $\epsilon=10^{-13}$ within 100 steps at some point in the simulation.
However, the only slight increases in \texttt{NW\_itr} compared to those in \Cref{tab:10_vortices_2nd_0.01} suggest that these are rather rare instances.
Notice also that this issue of convergence is resolved by using Broyden's method.

\Cref{fig:vortex_Inv_Tao_2nd,fig:vortex_Inv_2nd} show the time evolution of the invariants (using Broyden's method) just as we did in \Cref{fig:vortex_Inv_Tao,fig:vortex_Inv} with different parameters.
Notice that, although Tao's method is faster in most cases, its errors in some invariants increased whereas ours either stayed in the same level or became smaller.
It is also worth mentioning that, the last row of \Cref{fig:vortex_Inv_Tao} is the instance where our method is faster than Tao's, and ours is much more accurate than Tao's.

\begin{figure}[htbp]
  \centering
  \includegraphics[width=.9\linewidth]{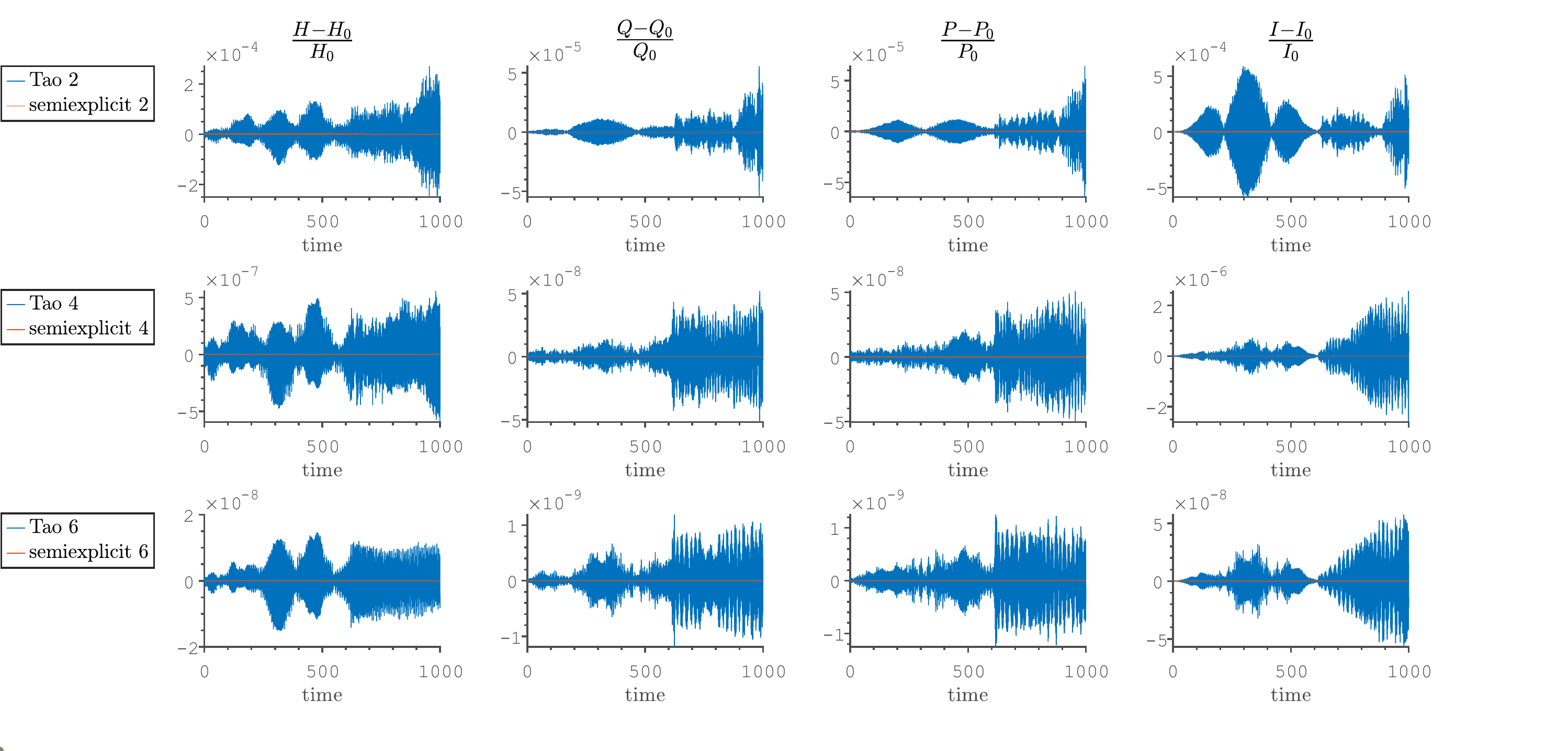}
  \caption{\small
    Same plots as \Cref{fig:vortex_Inv_Tao} except with parameters \eqref{eq:IC_10-votex_2} and $\dt = 0.01$ instead of \eqref{eq:IC_10-votex} and $\dt = 0.1$.
  }
  \label{fig:vortex_Inv_Tao_2nd}
\end{figure}

\begin{figure}[htbp]
  \centering
  \includegraphics[width=.9\linewidth]{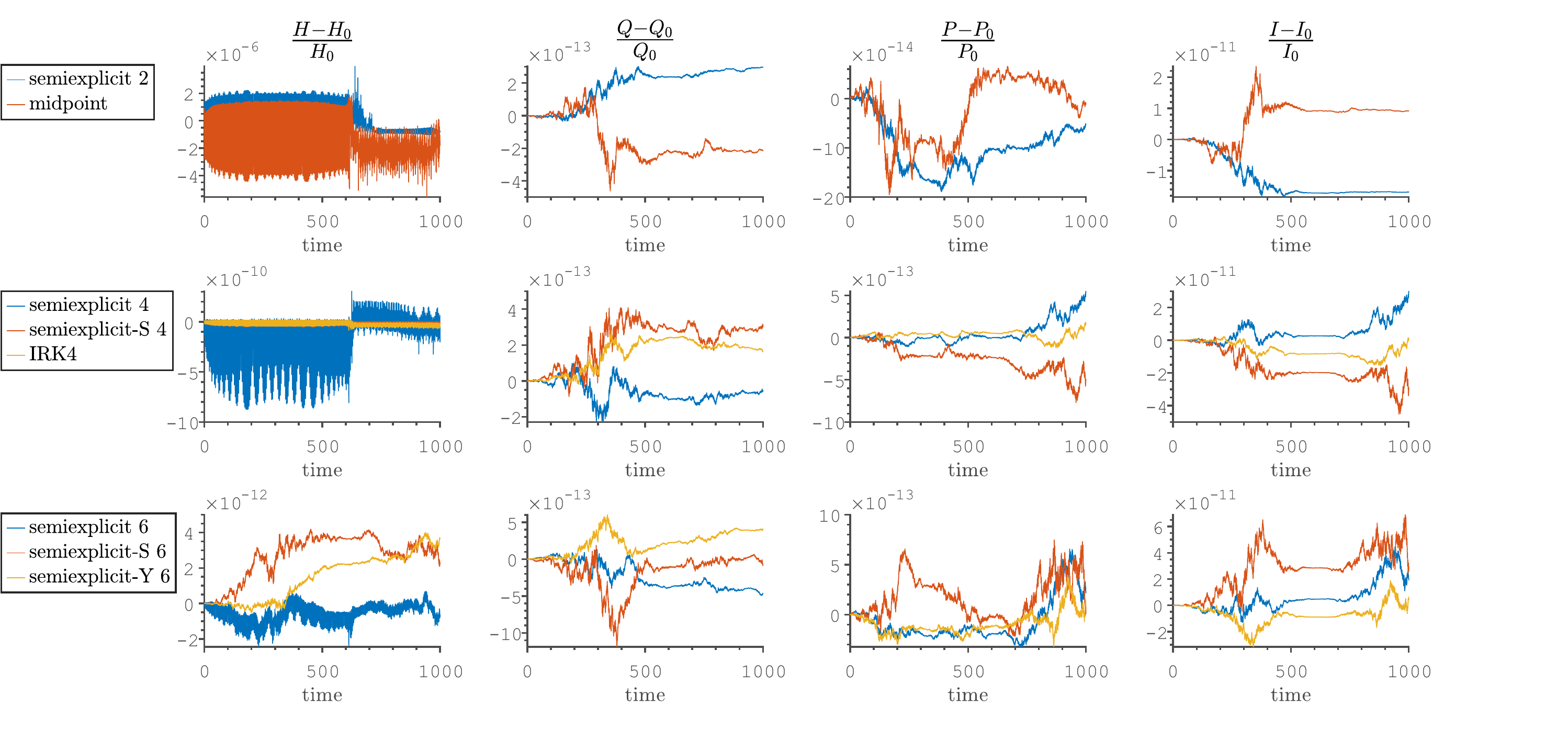}
  \caption{\small
    Same plots as \Cref{fig:vortex_Inv} except with parameters \eqref{eq:IC_10-votex_2} and $\dt = 0.01$ instead of \eqref{eq:IC_10-votex} and $\dt = 0.1$.
  }
  \label{fig:vortex_Inv_2nd}
\end{figure}

\subsection{Conclusion of Numerical Study}
The above numerical experiments suggest first that the time step and the parameters of the problem affect the number of iterations in the projection step considerably.
Particularly, the NLS example in \Cref{appx:NLS} shows that increasing the time step may result in a significant increase in the number of iterations.
The 10-vortex example in \Cref{appx:10vortices} shows that the change in the parameters also affects the convergence of the projection step too, but can be compensated by selecting an appropriate time step $\dt$, again underscoring the importance of selecting a small enough $\dt$.

The results also show that Broyden's method is as fast as the simplified Newton method, and is more robust in the sense that it does not experience the convergence issue that the simplified Newton method occasionally did.
However, the convergence issue is rather minor, and the overall difference between the two methods is fairly small, suggesting that the simplified Newton method is quite effective despite its simplicity.

\section*{Acknowledgments}
We would like to thank Molei Tao for helpful discussions, and Joshua Burby, Michael Kraus, and Xin Wu for insightful comments.
We also would like to thank the reviewer for the questions and comments that helped us improve the paper.
This work was supported by NSF grant DMS-2006736.

\subsection*{Note added in Proof}
The numerical results suggest that our semiexplicit integrator preserves both the total mass $I$ (see \eqref{eq:total_mass} of the NLS problem and the angular impulse $I$ (see \eqref{eq:impulses}) of the vortex problem---both quadratic invariants.
In fact, our semiexplicit integrator preserves any quadratic invariant of the original Hamiltonian system~\eqref{eq:Ham}.
We shall report on this in the forthcoming paper~\cite{Oh-Semiexplicit2}.

\bibliographystyle{plainnat}
\bibliography{Semiexplicit}

\end{document}